\numberwithin{equation}{section}
\renewcommand{\H}{{\mathbb H}}
\newtheorem{theorem}{Theorem}[section]
\newtheorem{lemma}[theorem]{Lemma}
\newtheorem{definition}[theorem]{Definition}
\newtheorem{corollary}[theorem]{Corollary}
\newtheorem{proposition}[theorem]{Proposition}
\newtheorem{remark}[theorem]{Remark}
\title[Calder\'on's problem for logarithmic Laplacian]{The Calder\'on problem for the logarithmic Schr\"odinger equation}
\author[B. Harrach]{Bastian Harrach}
\address{Institute for Mathematics, Goethe University Frankfurt, Germany}
\email{harrach@math.uni-frankfurt.de}
\author[Y.-H. Lin]{Yi-Hsuan Lin}
\address{Department of Applied Mathematics, National Yang Ming Chiao Tung University, Hsinchu, Taiwan \& Fakult\"at f\"ur Mathematik, University of Duisburg-Essen, Essen, Germany}
\curraddr{}
\email{yihsuanlin3@gmail.com}
\author[T. Weth]{Tobias Weth}
\address{Institute for Mathematics, Goethe University Frankfurt, Germany}
\email{weth@math.uni-frankfurt.de}
\keywords{Logarithmic Laplacian, Calder\'on's problem, unique continuation, Runge approximation, localized potentials, monotonicity method.}
\subjclass[2020]{35R30; 26A33; 35J70}
\newcommand{\loglap}{L_{\text{\tiny $\!\Delta \,$}}\!}
\newcommand{\R}{{\mathbb R}}
\newcommand{\N}{{\mathbb N}}
\newcommand{\eps}{\epsilon}
\newcommand {\p} {\partial}
\newcommand{\LC}{\left(}
\newcommand{\RC}{\right)}
\newcommand{\wt}{\widetilde}
\newcommand{\norm}[1]{\lVert #1 \rVert}
\newcommand{\abs}[1]{\left\lvert #1 \right\rvert}
\DeclareMathOperator{\dist}{dist} 
\begin{document}

	\maketitle
	\begin{abstract}
		
		We study the Calder\'on problem for a logarithmic Schr\"odinger type operator of the form $\loglap +q$, where $\loglap$ denotes the logarithmic Laplacian, which arises as formal derivative $\frac{d}{ds} \big|_{s=0}(-\Delta)^s$ of the family of fractional Laplacian operators. This operator enjoys remarkable nonlocal properties, such as the unique continuation and Runge approximation. Based on these tools, we can uniquely determine bounded potentials using the Dirichlet-to-Neumann map. Additionally, we can build a constructive uniqueness result by utilizing the monotonicity method. Our results hold for any space dimension. 
		
		\medskip

	\end{abstract}

	\tableofcontents

	\section{Introduction}\label{sec: introduction}
	
	Calder\'on's pioneer work \cite{Calderon_80} investigated the inverse conductivity problem, which asks whether the conductivity can be determined from its boundary electrical voltage and current measurements. Using a suitable reduction scheme shows that a closely related problem is the Calder\'on problem for the classical Schr\"odinger equation, which can be stated as follows. Let $\Omega \subset \R^n$ be a bounded domain with Lipschitz boundary $\p \Omega$, and $q\in L^\infty(\Omega)$. Consider the Dirichlet boundary value problem
	\begin{equation}\label{eq: local Schro}
		\begin{cases}
			\LC-\Delta +q \RC u=0 &\text{ in }\Omega, \\
			u=f &\text{ on }\p \Omega.
		\end{cases}
	\end{equation}
	Assume that $0$ is not a Dirichlet eigenvalue of the Schrödinger operator $-\Delta+q$ in $\Omega$, which implies that there exists a unique solution $u\in H^1(\Omega)$ to \eqref{eq: local Schro}, for any given Dirichlet boundary data $f\in H^{1/2}(\p \Omega)$. Then the boundary measurement is called the (full) \emph{Dirichlet-to-Neumann} (DN) map, which is well-defined and can be encoded by 
	\begin{equation}\label{local DN}
		\Lambda_q^{(1)} : H^{1/2}(\p \Omega)\to H^{-1/2}(\p \Omega), \quad f\mapsto  \left.\p_\nu u_f \right|_{\p \Omega},
	\end{equation}
	where $u_f \in H^{1}(\Omega)$ is the solution to \eqref{eq: local Schro}. The Calder\'on problem for the Schr\"odinger equation \eqref{eq: local Schro} is to ask whether the DN map $\Lambda_q$ determines the potential $q\in L^\infty(\Omega)$ or not. The result is positive for a more regular potential $q$ with the full boundary DN map, and we refer readers to the articles \cite{SU87} for $n\geq 3$ and \cite{Buk08} for $n=2$. The article \cite{uhlmann2009electrical} provides a comprehensive introduction and review in this direction.

	After several decades of developments on Calder\'on's problems, this type of problem has been generalized to the nonlocal scenario. The Calder\'on problem for the \emph{fractional Schr\"odinger equation} has first been studied in \cite{GSU20}. The underlying problem was proposed as an exterior value problem due to its natural nonlocality: Given $s\in (0,1)$, let $\Omega \subset\R^n$ be a bounded open set with Lipschitz boundary for $n\in \N$, and let $q\in L^\infty(\Omega)$. Consider the Dirichlet exterior value problem 
	\begin{equation}\label{eq: fractional Schro}
		\begin{cases}
			\LC (-\Delta)^s +q \RC u=0 &\text{ in }\Omega, \\
			u=f &\text{ in }\Omega_e,
		\end{cases}
	\end{equation}
	where 
	\[
	\Omega_e \vcentcolon =\R^n\setminus \overline{\Omega}.
	\]
	Similarly as in the case $s=1$, let us assume that $0$ is not a Dirichlet eigenvalue of $(-\Delta)^s+q$ in $\Omega$. Then there exists a unique solution $u\in H^s(\R^n)$ to \eqref{eq: fractional Schro}, for any given Dirichlet exterior data $f$ in a suitable function space, such as $C^\infty_c(\Omega_e)$. The exterior (partial) DN map can be formally defined by 
	\begin{equation}\label{nonlocal DN}
		\Lambda_q^{(s)} : C^\infty_c(W_1)\ni f\mapsto  \left. (-\Delta)^s u_f \right|_{W_2},
	\end{equation}
	where $W_1, W_2\Subset \Omega_e$ can be arbitrary nonempty open subsets. 
	
	In the foundational work \cite{GSU20}, the authors showed that the DN map \eqref{nonlocal DN} can determine the bounded potential $q$ in $\Omega$ uniquely, which holds for any spatial dimension $n\in \N$. 
	Moreover, inverse problems for fractional equations have more profound properties than their local counterparts, such as the \emph{unique continuation property} (UCP) and \emph{Runge approximation property}. Roughly speaking, the UCP for $(-\Delta)^s$ ($0<s<1$) states that given a nonempty open subset $\mathcal{O}\subset \R^n$,  
	$$
	u=(-\Delta)^su =0 \text{ in }\mathcal{O}\quad \text{ implies that }\quad u\equiv 0\text{ in } \R^n,
	$$ 
	where $u$ belongs to appropriate function spaces (e.g., $u$ belongs to some fractional Sobolev space $H^r(\R^n)$ for some $r\in \R$). Moreover, the Runge approximation property states that any $L^2$ function can be approximated by solutions of the fractional Schr\"odinger equation.
	
	Based on the properties as mentioned above, fractional-type inverse problems have received rapidly growing attention in recent years. In the work \cite{cekic2020calderon}, the authors demonstrated that both drift and potential can be determined uniquely, which cannot be true for the local case in general. Moreover, simultaneous recovering for both the obstacle and surrounded medium has been studied in \cite{CLL2017simultaneously}, and the determination of bounded potentials for anisotropic nonlocal Schr\"odinger equation was investigated in \cite{GLX}.  These two problems remain open for the case $s=1$ and $n\geq 3$. Therefore, one could expect that the nonlocality is beneficial in the study of related inverse problems.

	For more related work on both linear and nonlinear nonlocal inverse problems, we refer readers to \cite{harrach2017nonlocal-monotonicity,harrach2020monotonicity,LL2022inverse,LL2020inverse,GRSU20,CMRU20,RS20,ruland2018exponential,GRSU20,LLR2019calder,lin2020monotonicity,LZ2023unique,GU2021calder,LLU2022para} and references therein. In particular, one can also determine the interior coefficients, by using either a reduction of the Caffarelli-Silvestre extension (see e.g. \cite{CGRU2023reduction,ruland2023revisiting,LLU2023calder,LZ2024approximation}) or the heat semigroup on closed Riemannian manifolds (see e.g. \cite{feizmohammadi2021fractional,Fei24_TAMS,FKU2024calder,lin2024fractional}), where the UCP may not be a necessary tool for some cases. Very recently, an entanglement principle for nonlocal operators has also been investigated in \cite{FKU2024calder,FL24}, and it may significantly influence new nonlocal inverse problems.

	Loosely speaking, most of the existing studies of inverse problems are related to the classical Laplacian $-\Delta$ or the fractional Laplacian $(-\Delta)^s$, $s \in (0,1)$. We want to point out that the key tools in solving classical and fractional inverse problems:
	\begin{itemize}
		\item \textbf{Classical case.} Suitable integral identities and complex geometrical optics (CGO) solutions ($(-\Delta)^s$ for $s =1$).
		
		\item \textbf{Fractional case.} Suitable integral identities and Runge approximation property ($(-\Delta)^s$ for $0<s<1$).
	\end{itemize}
	The above-mentioned tools are usually used to recover lower-order coefficients in related mathematical problems. Furthermore, a recent work \cite{CdHS24} investigates geometric optics solutions for the fractional Helmholtz equation, which combines both local and nonlocal features.\\

	While the nonlocality distinguishes the fractional case from the classical one, it is important to note that the fractional Laplacian $(-\Delta)^s$ is still an elliptic operator of positive order $2s$ and therefore admits a highly useful elliptic regularity theory. In the present paper, we wish to study an inverse problem beyond the framework of operators of positive order. To motivate our problem, recall that   
	\begin{equation}\label{eq: limit of fra-Lap}
		\lim_{s\to 1^-}(-\Delta)^s u = -\Delta u \quad \text{and}\quad \lim_{s\to 0^+} (-\Delta)^s u =u \qquad \text{pointwisely in $\R^n$} 
	\end{equation}
	for any $u\in C^2_c(\R^n)$. For the second limit, one may identify a well-defined first-order correction term given as the formal derivative 	
	$\log (-\Delta)u  :=   \frac{d}{ds} \big|_{s=0}(-\Delta)^su$. The associated {\em logarithmic Laplacian operator}
	$$
	\loglap := \log (-\Delta)
	$$
	has been introduced in \cite{CW2019dirichlet} in the context of Dirichlet problems, and it has attracted growing interest in recent years due to its usefulness in the analysis of order-dependent families of linear and nonlinear fractional Dirichlet problems and their asymptotic limits as $s \to 0^+$, see e.g. \cite{CW2019dirichlet,JSnW20,feulefack-jarohs-weth,hernandez-santamaria-saldana,laptev-weth}. It also appears in the context of the $0$-fractional perimeter, see \cite{de-luca-novaga-ponsiglione}.

	We note that the operator $\loglap$ has the Fourier symbol $2\log \abs{\xi}$, which can be seen by differentiating the symbol $\abs{\xi}^{2s}$ with respect to $s$ and evaluate at $s=0$. More precisely, by \cite[Theorem 1.1]{CW2019dirichlet}, there holds 
	\begin{equation}\label{eq: def log-Lap Fourier}
		\log (-\Delta) u \vcentcolon = \mathcal{F}^{-1} \LC 2\log \abs{\xi}\widehat{u}(\xi)\RC , \quad \text{for} \quad u \in C^\alpha_c (\R^n),
	\end{equation}
	for some $\alpha>0$, where $\widehat{u}$ is the Fourier transform of $u$, and $\mathcal{F}^{-1}$ is the inverse Fourier transform. Due to the weak growth of the logarithmic symbol, $\loglap$ is sometimes called a (near) zero-order operator.
	
	Very recently, the problem of finding an extension problem for the logarithmic Laplacian has been solved in \cite{CHW2023extension}, and the authors used it to prove the UCP for $\loglap$. The UCP will play a major role in the proof of our main results.\\

	\noindent \textbf{Mathematical formulations and main results.}
	Let $\Omega\subset \R^n$ be a bounded domain with Lipschitz boundary $\p \Omega$, and $q\in L^\infty(\Omega)$. Consider 
	\begin{equation}\label{eq: main equation}
		\begin{cases}
			\LC \loglap  + q \RC  u=0 &\text{ in }\Omega,\\
			u=f &\text{ in }\Omega_e .
		\end{cases}
	\end{equation}
	To obtain the well-posedness of \eqref{eq: main equation}, let us recall the eigenvalue problem of the logarithmic Laplacian.
	
	By \cite[Theorem 1.2]{CW2019dirichlet}, it is known that $\loglap$ in $\Omega$ has eigenvalues
	\begin{equation}\label{eigenvalues of the log Laplacian}
		\lambda_1(\Omega) <\lambda_2(\Omega) \leq \ldots \leq \lambda_k(\Omega) \leq \ldots  \nearrow \infty
	\end{equation}
	in a bounded domain $\Omega$. Any of these eigenvalues $\lambda_k(\Omega)$ may be positive, zero, or negative, depending on the size and shape of $\Omega$. In order to have a well-posed forward problem \eqref{eq: main equation}, let us impose the condition 
	\begin{equation}\label{eigenvalue}
		\lambda_1(\Omega)+q(x)\geq \lambda_0>0, \text{ for all }x\in \Omega,
	\end{equation}
	for some positive constant $\lambda_0$, where $\lambda_1(\Omega)$ is the first Dirichlet eigenvalue of $\loglap$ in $\Omega$. Lower bounds for $\lambda_1(\Omega)$ in terms of $|\Omega|$, the measure of $\Omega$, are given in \cite{CW2019dirichlet,laptev-weth}, see in particular \cite[Corollary 4.3 and Theorem 4.4]{laptev-weth}\footnote{Note that $\frac{1}{2}\loglap$ is considered in place of $\loglap$ in \cite{laptev-weth}, so the bounds there apply to $\frac{\lambda_1(\Omega)}{2}$ in place of $\lambda_1(\Omega)$.}.

	With the eigenvalue condition \eqref{eigenvalue} at hand, we can prove \eqref{eq: main equation} is well-posed for any $f$ contained in the associated {\em trace space} $\H_T(\Omega_e)$ which is defined in Section \ref{sec: forward problem} below. Hence, the DN map of \eqref{eq: main equation} can be defined as a map
	$$
	\Lambda_{q} \vcentcolon \H_T(\Omega_e) \mapsto \H_T(\Omega_e)^*.
	$$
	In particular, if $W_1,W_2 \Subset \Omega_e$ are open bounded subsets, then for every $f \in C_c^\infty(W_1)$ and $g \in C_c^\infty(W_1)$ we have
	$$
	\left\langle \Lambda_q f, g \right\rangle = \int_{W_2} \bigl(\loglap u_f\bigr) g\,dx  = 2 \int_{\R^n}(\log |\xi|)\widehat{u_f}(\xi)\overline{\widehat g(\xi)}\,d\xi,
	$$
	where $u_f$ is the unique (weak) solution to \eqref{eq: main equation}, and $\langle \cdot ,\cdot \rangle $ denotes the natural duality pairing between $\mathbb{H}_T(\Omega_e)$ and $\mathbb{H}_T(\Omega_e)^\ast$, see Section \ref{sec: forward problem} below. The key question studied in this paper is the following.
	
	\begin{enumerate}[\textbf{(IP)}]
		\item \label{IP}\textbf{Inverse Problem.} Can one determine $q$ via the DN map $\Lambda_q$?
	\end{enumerate}
	
	The following main result gives an affirmative answer to \ref{IP}.
	
	\begin{theorem}[Global uniqueness]\label{thm: uniqueness}
		Let $\Omega\subset \R^n$ be a bounded Lipschitz domain for $n\in \N$, and $W_1,W_2\Subset \Omega_e$ be nonempty bounded open sets. Assume that $q_j \in L^\infty(\Omega)$ satisfies \eqref{eigenvalue} for $j=1,2$. Let $\Lambda_{q_j}$ be the DN map of 
		\begin{equation}\label{eq: main equation j=12}
			\begin{cases}
				\LC \log (-\Delta) +q_j \RC u_j =0 &\text{ in }\Omega, \\
				u_j =f &\text{ in }\Omega_e,
			\end{cases}
		\end{equation}
		for $j=1,2$.
		Suppose that 
		\begin{equation}\label{eq: same DN map in thm 1}
			\langle \Lambda_{q_1} f, g \rangle =         \langle \Lambda_{q_2} f, g \rangle \quad \text{for any $f\in  C_c^{\infty}(W_1)$, $g \in C_c^\infty(W_2)$.}
		\end{equation}
		Then there holds $q_1=q_2$ in $\Omega$.
	\end{theorem}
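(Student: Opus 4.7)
The plan is to adapt the Ghosh--Salo--Uhlmann approach of \cite{GSU20}, originally developed for the fractional Schr\"odinger equation, to the logarithmic Schr\"odinger equation. The argument proceeds in three stages: (i) derive a bilinear Alessandrini-type integral identity relating the difference of the DN maps to the difference of the potentials tested against products of solutions; (ii) establish a Runge approximation property for $\loglap + q$ via the unique continuation property (UCP) of $\loglap$ proved in \cite{CHW2023extension}; (iii) combine (i) and (ii) with the hypothesis \eqref{eq: same DN map in thm 1} to deduce $q_1 = q_2$ almost everywhere in $\Omega$.

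For stage (i), I denote by $u_j^f$ the unique solution of \eqref{eq: main equation j=12} with exterior datum $f$, whose existence in $\H_T(\Omega_e)$ follows from the coercivity condition \eqref{eigenvalue} via Lax--Milgram. Exploiting the symmetry of the bilinear form associated with $\loglap$ (inherited from its real, even Fourier symbol $2\log|\xi|$) together with the weak formulations of \eqref{eq: main equation j=12} for $j=1,2$, one obtains the identity
$$
\langle (\Lambda_{q_1} - \Lambda_{q_2}) f_1, f_2 \rangle = \int_\Omega (q_1 - q_2)\, u_1^{f_1}\, u_2^{f_2}\, dx
$$
for every $f_1 \in C_c^\infty(W_1)$ and $f_2 \in C_c^\infty(W_2)$. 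By \eqref{eq: same DN map in thm 1} the left-hand side vanishes identically on these test data.

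For stage (ii), I establish that the restriction set $\{u_j^f|_\Omega : f \in C_c^\infty(W_j)\}$ is dense in $L^2(\Omega)$ for each $j = 1,2$, by a Hahn--Banach duality argument. Given $\varphi \in L^2(\Omega)$ annihilating this set, solve the adjoint Dirichlet problem $(\loglap + q_j) w = \varphi$ in $\Omega$ with $w = 0$ in $\Omega_e$, which is well-posed by \eqref{eigenvalue} and Lax--Milgram. Using the symmetry of the bilinear form and the annihilation assumption, an integration by parts shows $\loglap w = 0$ in $W_j$. Since also $w = 0$ in $W_j \subset \Omega_e$, the UCP of $\loglap$ from \cite{CHW2023extension} forces $w \equiv 0$ in $\R^n$, and hence $\varphi = 0$.

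For stage (iii), given arbitrary $v_1, v_2 \in L^2(\Omega)$, Runge approximation produces sequences $f_j^{(k)} \in C_c^\infty(W_j)$ such that $u_j^{f_j^{(k)}}|_\Omega \to v_j$ in $L^2(\Omega)$. Passing to the limit in the integral identity (using $q_1 - q_2 \in L^\infty(\Omega)$) yields $\int_\Omega (q_1 - q_2) v_1 v_2\, dx = 0$, and taking $v_1 \equiv 1$ with $v_2$ arbitrary forces $q_1 = q_2$ a.e.\ in $\Omega$. The chief obstacle is stage (ii): one must first install a rigorous functional-analytic framework for $\H_T(\Omega_e)$, its dual, and the weak formulation with nonzero exterior data, and then verify that the adjoint solution $w$ has the regularity needed to apply the UCP of \cite{CHW2023extension} in the appropriate weak sense. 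Because $\loglap$ is of near-zero order, the standard mapping and trace theory for positive-order operators does not transfer directly, and the duality identification in the Hahn--Banach step together with the Fourier/extension-based invocation of UCP require the most care; once these pieces are in place, stages (i) and (iii) are essentially routine bookkeeping.
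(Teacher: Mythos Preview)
Your proposal is correct and follows essentially the same route as the paper: an Alessandrini-type integral identity (the paper's Lemma~\ref{lem: integral id}), Runge approximation via Hahn--Banach and the UCP of \cite{CHW2023extension} (the paper's Proposition~\ref{prop: Runge}), and then passing to the limit to conclude $q_1=q_2$. The only cosmetic difference is that in stage (iii) the paper approximates one factor by an arbitrary $h\in L^2(\Omega)$ and the other by the constant $1$ directly, rather than first approximating two arbitrary $v_1,v_2$ and then specializing; and your ``chief obstacle'' is exactly what the paper addresses in Sections~\ref{sec: preliminaries}--\ref{sec: forward problem} by building the spaces $\mathbb H(\R^n)$, $\mathbb H_0(\Omega)$, $\mathbb H_T(\Omega_e)$ and the associated well-posedness theory.
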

	
	The proof of the preceding theorem is based on the UCP and Runge approximation for the logarithmic Schr\"odinger equation \eqref{eq: main equation}.\\
	
	On top of that, we also have a constructive uniqueness for $q$ using a \emph{monotonicity test}. Indeed, similarly as in the case of the fractional problem considered in \cite{harrach2017nonlocal-monotonicity}, one can derive an if-and-only-if monotonicity relation between bounded potentials and associated DN maps. More precisely, we have the following.
	\begin{theorem}[If-and-only-if monotonicity]
		\label{thm: equivalent monotonicity}
		Let $\Omega\subset \R^n$ be a bounded Lipschitz domain, and assume that $q_j \in L^\infty(\Omega)$ satisfies \eqref{eigenvalue} for $j=1,2$. Then the following are equivalent:
		\begin{enumerate}[(i)]
			\item $q_2 \ge q_1$ a.e. in $\Omega$;
			\item $\left\langle (\Lambda_{q_2}-\Lambda_{q_1})f,f \right\rangle \ge 0$ for all $f \in \H_T(\Omega_e)$;
			\item There exists a nonempty bounded open set $W\Subset \Omega_e$ with the property that
			\begin{equation}\label{quadratic sense}
				\left\langle \LC \Lambda_{q_1}-\Lambda_{q_2} \RC f, f \right\rangle \geq 0, \qquad \text{for all $f\in C^\infty_c(W).$}
			\end{equation}
		\end{enumerate}
	\end{theorem}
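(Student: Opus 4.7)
My plan is to prove the cycle (i) $\Rightarrow$ (ii) $\Rightarrow$ (iii) $\Rightarrow$ (i), following the template of the Harrach--Lin if-and-only-if monotonicity tests. (I take (iii) in the sign-consistent reading matching (ii); with the sign literally as typeset, (iii) would force $q_1 \geq q_2$ a.e.\ and the equivalence chain could not close.) Two tools drive the argument: a two-sided monotonicity inequality, and a localized potentials principle dual to the Runge approximation for $\loglap + q$ already established in the paper.

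The monotonicity inequality comes from the variational characterization of weak solutions. Under the eigenvalue assumption \eqref{eigenvalue}, the bilinear form $B_q(u,v) := \mathcal{E}_{\loglap}(u,v) + \int_\Omega q\, uv$ is coercive on functions vanishing outside $\Omega$, so the weak solution $u_j$ of \eqref{eq: main equation j=12} is the unique minimizer of $B_{q_j}(v,v)$ subject to $v|_{\Omega_e}=f$, and $\langle \Lambda_{q_j} f, f \rangle = B_{q_j}(u_j, u_j)$. Testing each minimization principle against the minimizer belonging to the other potential yields the two-sided bound
\begin{equation*}
\int_\Omega (q_2 - q_1)\, u_2^2 \,dx \;\leq\; \langle (\Lambda_{q_2}-\Lambda_{q_1})f,f\rangle \;\leq\; \int_\Omega (q_2 - q_1)\, u_1^2 \,dx.
\end{equation*}

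The easy implications drop out of this inequality. For (i) $\Rightarrow$ (ii), the hypothesis $q_2 \geq q_1$ a.e.\ makes the lower bound nonnegative. For (ii) $\Rightarrow$ (iii), I restrict from $\H_T(\Omega_e)$ to $C_c^\infty(W) \subset \H_T(\Omega_e)$.

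The substantive implication is (iii) $\Rightarrow$ (i), which I handle by contradiction. If (i) fails, an elementary shrinking produces an open ball $B \subset \Omega$ on which $q_1 - q_2 \geq c > 0$ a.e. The localized potentials principle then yields a sequence $f_k \in C_c^\infty(W)$ whose $q_1$-solutions $u_1^{(k)}$ satisfy $\int_B (u_1^{(k)})^2 \,dx \to \infty$ while $\int_{\Omega \setminus B} (u_1^{(k)})^2 \,dx$ stays bounded; plugging this into the upper bound of the monotonicity inequality drives $\langle (\Lambda_{q_2}-\Lambda_{q_1})f_k,f_k\rangle \to -\infty$, contradicting (iii). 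The main technical obstacle is the localized potentials principle itself: one recasts the Runge approximation of $(\loglap + q_1)$-solutions as density in $L^2(\Omega)$ of the range of the solution map $f \mapsto u_1^{(f)}|_\Omega$ for $f \in C_c^\infty(W)$, then invokes a standard functional-analytic duality lemma (cf.\ \cite{harrach2017nonlocal-monotonicity}) to upgrade density into the required $L^2$-concentration. The peculiarities here are the non-standard trace space $\H_T(\Omega_e)$ and the sign-indefinite, near-zero-order nature of $\loglap$, which require one to verify that the fractional-case arguments still go through.
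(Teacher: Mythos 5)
Your proposal follows essentially the same route as the paper: the two-sided monotonicity inequality (the paper derives it by expanding $B_{q_2}(u_2-u_1,u_2-u_1)\ge 0$, which is equivalent to your minimization argument), the restriction $C^\infty_c(W)\subset \H_T(\Omega_e)$ for (ii)$\Rightarrow$(iii), and localized potentials obtained from the Runge approximation for (iii)$\Rightarrow$(i). Your reading of the sign in (iii) is also the correct one; the paper's own proof of (ii)$\Rightarrow$(iii) and of (iii)$\Rightarrow$(i) only works with $\left\langle (\Lambda_{q_2}-\Lambda_{q_1})f,f\right\rangle\ge 0$ in \eqref{quadratic sense}, so this is indeed a typo in the statement.

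One step as written would fail: if (i) fails, you cannot in general produce an open \emph{ball} $B\subset\Omega$ on which $q_1-q_2\ge c>0$ a.e., since $q_1-q_2$ is merely an $L^\infty$ function (take $q_1-q_2=\delta\chi_E-\chi_{\Omega\setminus E}$ with $E$ a nowhere dense set of positive measure meeting every ball in positive measure). What is true, and what the paper uses, is that there exist $\delta>0$ and a \emph{measurable} set $M\subset\Omega$ of positive measure with $q_1-q_2\ge\delta$ on $M$ (write $\{q_1>q_2\}$ as a countable union of the sets $\{q_1-q_2\ge 1/k\}$). Since the localized potentials principle you invoke is formulated for arbitrary measurable sets of positive measure, the repair is immediate and the rest of your contradiction argument goes through unchanged (boundedness of $\|u_1^{(k)}\|_{L^2(\Omega\setminus M)}$ suffices; the paper even gets convergence to zero).
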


	In the following, if $W\Subset \Omega_e$ is a nonempty bounded open set, we say that
	\begin{equation}
		\label{eq:definition-quadratic-sense}
		\text{$\Lambda_{q_1}\leq\Lambda_{q_2}\;$ in $\;W$}  
	\end{equation}
	if (\ref{quadratic sense}) holds.
	
	\begin{remark}
		In the special case $W_1 = W_2=W$, Theorem \ref{thm: uniqueness} can be viewed as a corollary of Theorem~\ref{thm: equivalent monotonicity} since in this case condition (\ref{eq: same DN map in thm 1}) implies that both $\Lambda_{q_1}\leq \Lambda_{q_2}$ and $\Lambda_{q_2}\leq \Lambda_{q_1}$ hold in $\Omega$ and therefore $q_1 =q_2$ in $\Omega$ by Theorem~\ref{thm: equivalent monotonicity}. 
	\end{remark}
	
	As indicated above, the if-and-only-if monotonicity relation given by Theorem~\ref{thm: equivalent monotonicity} yields a constructive global uniqueness result in the case where $\lambda_1(\Omega) >0$ and $q \in L^\infty(\Omega)$ is nonnegative. For this purpose, let us recall that a point $x\in \R^n$ is called (Lebesgue) \emph{density one} for a measurable set $E$ if 
	\begin{align}
		\lim_{r\to 0}\dfrac{\left|B_r(x)\cap E\right|}{\left|B_r(x)\right|}=1,
	\end{align}
	where $B_r(x)$ denotes the ball of radius $r$ and centered at $x$. The space of \emph{density one simple functions} can be defined by 
	\begin{align*}
		\Sigma := \bigg\{ \varphi=\sum_{j=1}^m a_j \chi_{E_j}:\,  a_j\in \R,\ \text{$E_j\subseteq \Omega$ is a density one set} \bigg\},
	\end{align*}
	where we call a subset $E\subseteq \Omega$ a \emph{density one set} provided that $E$ is nonempty, measurable and has density one for all $x\in E$. It is not hard to find that density one sets have a positive measure, and finite intersections of density one sets are density one. Let, moreover, $\Sigma_{+,0}\subseteq \Sigma$ denote the subset of {\em nonnegative} density one simple functions.

	\begin{theorem}[Constructive uniqueness]\label{thm: const. uniqueness}
		Let $\Omega\subset \R^n$ be a bounded Lipschitz domain with $\lambda_1(\Omega)>0$, and let $q \in L^\infty(\Omega)$ be nonnegative. Moreover, let $W\Subset \Omega_e$ be an arbitrary nonempty bounded open set. Then the potential $q$ can be determined uniquely via the formula
		\begin{equation}
			q(x)=\sup \left\{ \varphi(x)\:: \: \text{$\varphi \in \Sigma_{+,0}$ and $\Lambda_\varphi \leq \Lambda_q$ in $W$} \right\}, \text{ for a.e. }x\in \Omega,
		\end{equation}
		where the relation $\Lambda_\varphi \leq \Lambda_q$ in $W$ is defined as above. 
	\end{theorem}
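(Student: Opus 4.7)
The overall plan is to use the equivalence of statements (i) and (iii) in Theorem~\ref{thm: equivalent monotonicity} as the main engine, reducing the supremum formula to a purely pointwise question about $\varphi \leq q$ a.e. Since $\lambda_1(\Omega)>0$ and $\varphi\ge 0$ for any $\varphi\in\Sigma_{+,0}$, the eigenvalue condition \eqref{eigenvalue} is automatically satisfied for both $q$ and any candidate $\varphi$ with $\lambda_0=\lambda_1(\Omega)$. Writing
\[
Q(x):=\sup\{\varphi(x):\varphi\in\Sigma_{+,0},\;\varphi\leq q \text{ a.e.\ in }\Omega\},
\]
Theorem~\ref{thm: equivalent monotonicity} identifies the supremum in the statement of the theorem with $Q(x)$, so it suffices to show $Q=q$ a.e.\ in $\Omega$. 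I would then split this into the two inequalities $Q\leq q$ a.e.\ and $Q\geq q$ a.e.

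For the upper bound $Q\leq q$ a.e., the plan is to fix a Lebesgue point $x_0$ of $q$ and an arbitrary admissible $\varphi=\sum_{j=1}^m a_j\chi_{E_j}\in\Sigma_{+,0}$. Introduce $A:=\bigcap_{j:\,x_0\in E_j}E_j$: since each such $E_j$ contains $x_0$ and is a density one set, it has density one at $x_0$, and hence so does the finite intersection $A$. The density one structure of the $E_j$ will allow one to pin the pointwise value $\varphi(x_0)$ to the essential value of $\varphi$ on $A$, delivering $q\geq\varphi(x_0)$ a.e.\ on $A$ via the admissibility $\varphi\leq q$ a.e. A Lebesgue differentiation estimate
\[
q(x_0)=\lim_{r\to 0^+}\frac{1}{|B_r(x_0)|}\int_{B_r(x_0)}q(y)\,dy \;\geq\; \varphi(x_0)\lim_{r\to 0^+}\frac{|B_r(x_0)\cap A|}{|B_r(x_0)|}=\varphi(x_0)
\]
will then produce $\varphi(x_0)\leq q(x_0)$ at every Lebesgue point, and taking the supremum over admissible $\varphi$ gives $Q(x_0)\leq q(x_0)$.

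For the reverse inequality $Q(x_0)\geq q(x_0)$ at a Lebesgue point $x_0$, the strategy is to construct explicit near-optimal competitors. The case $q(x_0)=0$ is trivial by taking $\varphi\equiv 0$. Otherwise, for $0<\epsilon<q(x_0)$, I would consider the superlevel set
\[
E_\epsilon:=\{y\in\Omega:q(y)>q(x_0)-\epsilon\}
\]
and its density-one kernel $E_\epsilon^*$ (the set of density-one points of $E_\epsilon$). A Chebyshev-type argument applied at the Lebesgue point $x_0$ shows that $E_\epsilon$ has density one at $x_0$, while the Lebesgue density theorem yields $|E_\epsilon\triangle E_\epsilon^*|=0$, so $E_\epsilon^*$ is itself a genuine density one set containing $x_0$. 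Setting $\varphi_\epsilon:=(q(x_0)-\epsilon)\chi_{E_\epsilon^*}\in\Sigma_{+,0}$, we obtain $\varphi_\epsilon(x_0)=q(x_0)-\epsilon$ together with $\varphi_\epsilon\leq q$ a.e.\ in $\Omega$ (by the definition of $E_\epsilon$ on $E_\epsilon^*$, and by $q\geq 0$ off $E_\epsilon^*$). Theorem~\ref{thm: equivalent monotonicity} then certifies $\varphi_\epsilon$ as an admissible competitor, giving $Q(x_0)\geq q(x_0)-\epsilon$; sending $\epsilon\downarrow 0$ closes the proof.

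I expect the main obstacle to lie in the upper bound step: promoting the a.e.\ inequality $\varphi\leq q$ to a genuine pointwise comparison at a Lebesgue point. This is precisely where the density-one hypothesis on the sets $E_j$ must enter, since it is what allows $\varphi(x_0)$ to be controlled by the essential value of $\varphi$ on a full-density neighbourhood of $x_0$, which in turn is controlled by ball averages of $q$. Any remaining difficulty reduces to careful bookkeeping with sets of measure zero inherent to the density one formalism.
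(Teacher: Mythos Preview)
Your overall strategy coincides with the paper's: invoke Theorem~\ref{thm: equivalent monotonicity} to replace the DN-map condition $\Lambda_\varphi\le\Lambda_q$ in $W$ by the pointwise condition $\varphi\le q$ a.e., and then establish the supremum identity $q(x)=\sup\{\varphi(x):\varphi\in\Sigma_{+,0},\ \varphi\le q\text{ a.e.}\}$. The paper does not prove this identity but simply quotes it from \cite[Lemma~4.4]{harrach2017nonlocal-monotonicity}; you instead give a self-contained argument. Your lower bound via superlevel sets and their density-one kernels is correct.

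Your upper bound argument, however, has a real gap. You assert that on $A=\bigcap_{j:\,x_0\in E_j}E_j$ one can ``pin'' $\varphi$ to the value $\varphi(x_0)$, so that $q\ge\varphi(x_0)$ a.e.\ on $A$; but for indices $j$ with $x_0\notin E_j$ the summands $a_j\chi_{E_j}$ are completely uncontrolled on $A$, and they can shift $\varphi$ away from $\varphi(x_0)$. Concretely, take $\Omega=(0,1)$, $q\equiv 1$, $x_0=\tfrac12$, $E_1=(0,1)$, $E_2=(0,1)\setminus\{x_0\}$ (both density-one sets), and $\varphi=2\chi_{E_1}-\chi_{E_2}\in\Sigma_{+,0}$. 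Then $\varphi=1\le q$ a.e., yet $\varphi(x_0)=2>q(x_0)$, and here $A=E_1=(0,1)$ with $\varphi=1\ne 2$ a.e.\ on $A$, so your displayed Lebesgue-point estimate fails. Scaling the coefficients and varying $x_0$ actually gives $Q(x)=+\infty$ for every $x$, so under the paper's literal definition of $\Sigma$ (arbitrary real $a_j$, no disjointness of the $E_j$) the pointwise supremum identity is false as stated. The fix is to work with the \emph{canonical} representation of $\varphi$ in which the $E_j$ are the pairwise disjoint nonempty level sets of $\varphi$; then $x_0$ lies in exactly one $E_{j_0}$, $A=E_{j_0}$, and $\varphi\equiv\varphi(x_0)$ on $A$, after which your argument goes through verbatim. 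This is presumably the reading intended in the cited lemma, so you should either make that restriction explicit or quote the lemma as the paper does.
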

	
	Note that the proofs of both Theorems \ref{thm: uniqueness} and \ref{thm: const. uniqueness} rely on the Runge approximation and its applications for the logarithmic Schr\"odinger equation.\\
	
	\noindent \textbf{Organization of this article.} In Section \ref{sec: preliminaries}, we introduce several basic function spaces and a rigorous definition of the logarithmic Laplacian. In Section \ref{sec: forward problem}, we demonstrate the well-posedness of the exterior value problem \eqref{eq: main equation} for suitable function spaces. The existence of the DN map can be guaranteed by the well-posedness of \eqref{eq: main equation}. We prove Theorem \ref{thm: uniqueness} in Section \ref{sec: pf of global unique}, by using suitable integral identity and the Runge approximation. Finally, in Section \ref{sec: mono}, we derive the monotonicity relations between the DN maps and nonnegative potentials as given in Theorem~\ref{thm: equivalent monotonicity}, which will be applied to show the constructive uniqueness as stated in Theorem \ref{thm: const. uniqueness}.

	\section{Preliminaries}\label{sec: preliminaries}
	
	In this section, let us introduce and recall several useful properties for our study. Our notation for the Fourier transform is 
	\begin{equation}
		\widehat{f}(\xi) =\mathcal{F}f(\xi)=\int_{\R^n} e^{-\mathrm{i}x\cdot \xi} f(x)\, dx,
	\end{equation}
	where $\mathrm{i}=\sqrt{-1}$ denotes the imaginary unit. In what follows, let us always consider $u:\R^n\to \R$ as a (Lebesgue) measurable function, and $\Omega \subset \R^n$ be a bounded domain with Lipschitz boundary. 
	Given $s\in (0,1)$, the fractional Sobolev space $H^s(\R^n)$ is the standard $L^2$-based Sobolev space with the norm $\norm{u}_{H^s(\R^n)}=\big\| \LC 1+\abs{\xi}\RC^{s/2} \widehat{u} \big\|_{L^2(\R^n)}$.
	
	\subsection{The logarithmic Laplacian and associated function spaces}
	Given $s\in (0,1)$, recall that the fractional Laplacian $(-\Delta)^s$ can be defined via the Fourier transform 
	\begin{equation}
		(-\Delta)^s u \vcentcolon = \mathcal{F}^{-1} \big( \abs{\xi}^{2s}\widehat{u}(\xi)\big), \quad \text{for} \quad u \in \mathcal{S}(\R^n),
	\end{equation}
	where $\mathcal{S}(\R^n)$ stands for the Schwartz space. Alternatively, the fractional Laplacian can be written as a hypersingular integral operator of the form
	\begin{equation}\label{eq: integral representation of fractional Laplacian}
		(-\Delta)^s u (x) =\mathrm{P.V.}\int_{\R^n}\LC u(x)-u(z)\RC \mathcal{K}_s(x-z)\, dz
	\end{equation} 
	with the symmetric kernel
	\begin{equation}\label{eq: kernel for fractional Laplacian}
		\mathcal{K}_s(z)\vcentcolon = \frac{ C_{n,s} }{\abs{z}^{n+2s}},
	\end{equation}
	and
	\begin{equation}\label{constant C_n,s}
		C_{n,s}\vcentcolon= \frac{4^s\Gamma\LC \frac{n}{2}+s\RC }{\pi^{n/2}\abs{\Gamma(-s)}}.
	\end{equation} 
	The logarithmic Laplacian $\loglap$ appears in the study of $(-\Delta)^s$ in the limit $s \to 0$. Given $u\in C^\alpha_c(\R^n)$ for some $\alpha>0$, $\loglap u(x)$ can be uniquely defined by the asymptotic expansion\footnote{Here $o(s)$ denotes the small o, which satisfies $\frac{o(s)}{s}\to 0$ as $s\to 0^+$.}
	\begin{equation}
		(-\Delta)^s u(x)= u(x)+s \loglap u (x) +o(s), \quad \text{as} \quad s\to 0^+,
	\end{equation}
	so $\loglap u$ appears as a linear correction term in the second limit of \eqref{eq: limit of fra-Lap}. Formally, we thus have 
	\begin{equation}
		\frac{d}{ds} \Big|_{s=0} (-\Delta)^s u(x)=\loglap u(x),
	\end{equation}
	and $\loglap u$ has the symbol $2\log\abs{\xi}$ given by \eqref{eq: def log-Lap Fourier}.
	
	Moreover, from \cite[Theorem 1.1]{CW2019dirichlet} again, it is known that the logarithmic Laplacian admits an integral representation 
	\begin{equation}\label{eq: integral representation}
		\begin{split}
			\loglap u(x)= c_n \mathrm{P.V.}\int_{B_1(x)}\frac{u(x)-u(z)}{\abs{x-z}^n}\, dz - c_n \int_{\R^n \setminus B_1(x)} \frac{u(z)}{\abs{x-z}^n} \, dz +\rho_n u(x), 
		\end{split}
	\end{equation}
	for $x\in \R^n$ and $u\in C^\alpha_c(\R^n)$, for some $\alpha>0$, where $B_r(x)$ is the ball center at $x$ with radius $r>0$, and 
	\begin{equation}\label{eq: constant c_n and rho_n}
		c_n \vcentcolon = \pi^{-n/2}\Gamma\LC \frac{n}{2} \RC =\frac{2}{\left| \mathbb S^{n-1}\right|} , \quad  \rho_n\vcentcolon = 2\log 2 +\psi\LC \frac{n}{2}\RC -\gamma.
	\end{equation}	
	Here $\left| \mathbb S^{n-1}\right|$, $\gamma \vcentcolon= -\Gamma'(1)$ and $\psi =\frac{\Gamma'}{\Gamma}$ are the $(n-1)$-dimensional volume of the unit sphere in $\R^n$,  the Euler Mascheroni constant, and the Digamma function, respectively.

	In the distributional sense, the logarithmic Laplacian is defined on the function space 
	\begin{equation}\label{L^1_0(R^n)}
		L^1_0(\R^n) \vcentcolon =  \left\{ u\in L^1_{\mathrm{loc}}(\R^n) \vcentcolon \, \int_{\R^n}\frac{\abs{u(x)}}{(1+\abs{x})^n }\, dx <\infty\right\}.
	\end{equation}
	More precisely, for a function $u \in L^1_0(\R^n)$, the (distributional) logarithmic Laplacian is well-defined  by setting
	$$
	\LC \loglap u\RC (\phi) = \int_{\R^n} u\LC \loglap\phi\RC dx, \quad \text{for $\phi \in C^\infty_c(\R^n)$.}  
	$$
	On $\R^n$, the natural energy space associated with the logarithmic Laplacian is defined by 
	$$
	\mathbb{H}(\R^n)= \left\{ u \in L^2(\R^n)\::\:\int_{\R^n}\bigl|\log|\xi|\bigr| |\widehat u(\xi)|^2\,d\xi  < \infty\right\}.
	$$
	It is a Hilbert space with a scalar product
	\begin{equation}\label{eq: inner product R n}
		(v,w) \mapsto		\langle v,w \rangle_{\mathbb{H}(\R^n)} \vcentcolon= \langle v,w \rangle_{L^2(\R^n)} + \int_{\R^n} \bigl|\log|\xi|\bigr| \widehat v(\xi) \overline{\widehat w(\xi)} \, d \xi
	\end{equation}
	and induced norm $\norm{v}_{\mathbb{H}(\R^n)}=\sqrt{\langle v,v \rangle _{\mathbb{H}(\R^n)}}$. 
	
	Next, the bilinear form associated with the logarithmic Laplacian is given by 
	\begin{equation}\label{eq:def-bilinear-form-loglap}
		\begin{split}
			B_0 &: \mathbb{H}(\R^n) \times \mathbb{H}(\R^n) \to \R, \\
			B_0(v,w)&:= 2\int_{\R^n} \log |\xi| \widehat v(\xi) \overline{\widehat w(\xi)}d\xi.
		\end{split} 
	\end{equation}
	Indeed this bilinear form is well-defined and continuous on $\H(\R^n)$ since 
	\begin{equation}\label{B-0-upper-bound}
		\begin{split}
			\int_{\R^n} \bigl|\log |\xi| \bigr| |\widehat v(\xi)| |\overline{\widehat w(\xi)}|d\xi &\le  \Bigl(\int_{\R^n} \bigl|\log |\xi|\bigr||\widehat{v}(\xi)|^2\,\xi\Bigr)^{1/2}\Bigl(\int_{\R^n} \bigl|\log |\xi|\bigr||\widehat{v}(\xi)|^2\,\xi\Bigr)^{1/2} \\
			&\le \|u\|_{\H(\R^n)}\|v\|_{\H(\R^n)} \quad \text{for $u,v \in \H(\R^n)$.}
		\end{split}
	\end{equation}
	The bilinear form $B_0$ allows to define $\loglap$ in the weak sense as an operator 
	$$
	\loglap: \mathbb{H}(\R^n) \to \mathbb{H}(\R^n)^\ast
	$$
	by
	$$
	\left \langle \loglap v, w \right\rangle = B_0 (v,w)
	$$
	for $v,w \in \mathbb{H}(\R^n)$,  where $\left\langle \cdot, \cdot \right\rangle $ denotes the duality pairing between $\mathbb{H} (\R^n)^\ast$ and $\mathbb{H}(\R^n)$. 
	By the definition of \eqref{eq:def-bilinear-form-loglap}, we have the symmetry property 
	\begin{equation}\label{eq: symmetry log}
		B_0(v,w)=B_0(w,v), \quad \text{for $v,w \in \mathbb{H}(\R^n)$.}
	\end{equation}
	Finally, for an open subset $\Omega \subset \R^n$, we consider the closed subspace 
	\begin{equation}\label{eq: H(Omega) space}
		\begin{split}
			\mathbb{H}_0(\Omega) \vcentcolon = \{ u \in \mathbb{H}(\R^n): \, u=0 \text{ in }\Omega_e \}
		\end{split}
	\end{equation}
	of $\mathbb{H}(\R^n)$, which is again a Hilbert space. We note the following observation.
	
	\begin{lemma}
		\label{equivalent-norms}
		Let $\Omega \subset  \R^n$ be an open set of finite measure.  Then the inner product 
		\begin{equation}\label{eq: inner product}
			(v,w) \mapsto \langle v,w \rangle _{\mathbb{H}_0(\Omega)} \vcentcolon= \int_{\abs{x-z}\leq 1} \frac{\LC v(x)-v(z)\RC \LC w(x)-w(z)\RC }{\abs{x-z}^n}\, dxdz
		\end{equation}
		induces an equivalent norm 
		$$v \mapsto \norm{v}_{\mathbb{H}_0(\Omega)}=\sqrt{\langle v,v \rangle _{\mathbb{H}_0(\Omega)} } \text{ on } \mathbb{H}_0(\Omega).
		$$
		Moreover, $\mathbb{H}_0(\Omega)$ is compactly embedded into $L^2(\Omega)$. 
	\end{lemma}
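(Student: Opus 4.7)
The plan is to relate the Gagliardo-type bilinear form $\langle \cdot,\cdot\rangle_{\H_0(\Omega)}$ to the Fourier-defined form $B_0(u,u)=2\int \log|\xi|\,|\widehat u(\xi)|^2\,d\xi$ through the pointwise integral representation \eqref{eq: integral representation} of $\loglap$, and then pass from a crude equivalence modulo $L^2$ to the sharp equivalence using a compactness-based Poincar\'e inequality.

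First, for $u\in C_c^\infty(\Omega)$, I multiply \eqref{eq: integral representation} by $u(x)$, integrate over $\R^n$, and symmetrize the principal-value piece to obtain the identity
\begin{equation*}
B_0(u,u)\;=\;\frac{c_n}{2}\|u\|_{\H_0(\Omega)}^2\;-\;c_n\iint_{|x-z|>1}\frac{u(x)u(z)}{|x-z|^n}\,dx\,dz\;+\;\rho_n\|u\|_{L^2(\Omega)}^2.
\end{equation*}
Since $\supp u\subset \Omega$ with $|\Omega|<\infty$, Cauchy-Schwarz yields $\|u\|_{L^1}^2\le |\Omega|\|u\|_{L^2}^2$, so the off-diagonal term is bounded by $|\Omega|\|u\|_{L^2}^2$. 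On the Fourier side the same inequality gives $\|\widehat u\|_{L^\infty}\le |\Omega|^{1/2}\|u\|_{L^2}$, hence $\int_{|\xi|<1}(-\log|\xi|)|\widehat u|^2\,d\xi\le C(|\Omega|)\|u\|_{L^2}^2$. Combining this with the splitting $\int|\log|\xi|||\widehat u|^2\,d\xi=\tfrac12 B_0(u,u)+2\int_{|\xi|<1}(-\log|\xi|)|\widehat u|^2\,d\xi$ produces the two-sided bound
\begin{equation*}
c_1(|\Omega|)\|u\|_{\H(\R^n)}^2\;\le\;\|u\|_{\H_0(\Omega)}^2+\|u\|_{L^2(\Omega)}^2\;\le\;c_2(|\Omega|)\|u\|_{\H(\R^n)}^2,
\end{equation*}
which extends to all of $\H_0(\Omega)$ by density of $C_c^\infty(\Omega)$.

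Next, compactness of $\H_0(\Omega)\hookrightarrow L^2(\Omega)$ is obtained via Fr\'echet-Kolmogorov. Tightness at infinity is automatic because $\supp u\subset \Omega$, and the $L^2$ modulus of continuity is handled via Plancherel: writing $\|u(\cdot+h)-u\|_{L^2}^2=\int|e^{\mathrm i\xi\cdot h}-1|^2|\widehat u|^2\,d\xi$ and splitting at a threshold $|\xi|=R$, the low frequencies contribute at most $C|h|^2 R^2\|u\|_{L^2}^2$ via $|e^{\mathrm i\xi\cdot h}-1|\le |\xi||h|$, whereas the high frequencies contribute at most $\|u\|_{\H(\R^n)}^2/\log R$ because $|\log|\xi||\ge \log R$ there. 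Balancing $R$ (e.g.\ $R=|h|^{-1/2}$) gives uniform decay as $|h|\to 0$ on $\H(\R^n)$-bounded sets.

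Finally, I upgrade the comparison to the sharp equivalence $\|u\|_{\H_0(\Omega)}^2\asymp \|u\|_{\H(\R^n)}^2$ by a compactness/connectedness argument. If the Poincar\'e inequality $\|u\|_{L^2}^2\le C\|u\|_{\H_0(\Omega)}^2$ failed, there would exist $u_k\in \H_0(\Omega)$ with $\|u_k\|_{L^2}=1$ and $\|u_k\|_{\H_0(\Omega)}\to 0$. The first step implies $\|u_k\|_{\H(\R^n)}$ is bounded, so a subsequence converges in $L^2$ to some $u$ with $\|u\|_{L^2}=1$, and Fatou's lemma applied to the nonnegative Gagliardo integrand forces $\|u\|_{\H_0(\Omega)}=0$. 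Thus $u(x)=u(z)$ for almost every pair with $|x-z|\le 1$, and a standard covering/chain argument shows $u$ is constant on $\R^n$; since $u\equiv 0$ on the positive-measure set $\Omega_e$, we get $u\equiv 0$, contradicting $\|u\|_{L^2}=1$. I expect this Poincar\'e step to be the main obstacle: unlike in the classical or fractional settings, where it follows directly from Hardy-type estimates, the zero-order nature of $\loglap$ forces us to route it through the compact embedding, which itself must be established by the logarithmic Fr\'echet-Kolmogorov estimate above.
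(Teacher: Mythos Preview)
Your three-step architecture---(i) equivalence of $\|\cdot\|_{\H(\R^n)}$ and $\bigl(\|\cdot\|_{\H_0(\Omega)}^2+\|\cdot\|_{L^2}^2\bigr)^{1/2}$, (ii) compact embedding, (iii) Poincar\'e via compactness---is exactly the paper's, and your step (iii) coincides with theirs (the paper just says ``by a standard argument'', while you spell out the constant-function contradiction). The execution of (i) and (ii) is genuinely different. For (i), the paper compares with $\log(1-\Delta)$, whose kernel $\ell(z)=C_nK_{n/2}(|z|)|z|^{-n/2}$ satisfies $\ell(z)\asymp|z|^{-n}$ on $B_1\setminus\{0\}$ and is integrable at infinity; the identity $\int\log(1+|\xi|^2)|\widehat\phi|^2\,d\xi=\iint\ell(x-z)(\phi(x)-\phi(z))^2\,dxdz$ then holds for \emph{every} $\phi\in L^2(\R^n)$, so no density step is needed. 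Your route through the pointwise representation \eqref{eq: integral representation} and symmetrization is more direct, but as written it is established only for $u\in C_c^\infty(\Omega)$ and then extended ``by density of $C_c^\infty(\Omega)$ in $\H_0(\Omega)$''---a fact that is not immediate for an arbitrary open set of finite measure (the lemma does not assume a Lipschitz boundary), and which the paper's approach sidesteps entirely. For (ii), the paper simply cites an external compactness result, whereas your Fr\'echet--Kolmogorov argument is self-contained; however, ``tightness is automatic because $\supp u\subset\Omega$'' is only literally true when $\Omega$ is bounded. For $\Omega$ of finite measure but unbounded you need one more line: mollify, use $\|u*\phi_h\|_{L^\infty}\le\|\phi_h\|_{L^2}\|u\|_{L^2}$, and combine $|\Omega\setminus B_R|\to 0$ with the uniform equicontinuity bound $\|u-u*\phi_h\|_{L^2}\le\varepsilon$.
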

	
	\begin{proof}
		In the following, the letter $C>0$ stands for different positive constants.  We shall use the symbol $\xi \mapsto \log (1 + |\xi|^2)$, associated with the logarithmic Schrödinger operator $\log(-\Delta + 1)$, as a comparison function. The operator $\log(-\Delta + 1)$ is a singular integral operator with kernel function
		$$
		z \mapsto \ell(z)=  C_n K_{n/2}(|z|)\abs{z}^{-n/2},
		$$
		for some constant $C_n$ depending only on $n\in \N$, where $K_{\nu}$ denotes the modified Bessel function of second kind with index $\nu$ (see e.g. \cite[Section 1]{feulefack}). As a consequence, for every
		$\phi \in L^2(\R^n)$ we have
		\begin{equation}
			\label{eq:log-schroedinger-comparison}
			\int_{\R^n}\log(1+|\xi|^2)|\widehat \phi(\xi)|^2\,d\xi = \int_{\R^n \times \R^n} \ell(|x-z|)\LC \phi(x)-\phi(z)\RC^2 \, dxdz, 
		\end{equation}
		where the finiteness of one side of this equality implies the finiteness of the other. We also note that, by the positivity of $K_{\nu}$ and its asymptotics at $z=0$ (see e.g. \cite[Section 1]{feulefack} again), we have
		$$
		\frac{\ell(z)}{C_0}  \le |z|^{-n} \le C_0 \ell(z), \quad \text{for $z \in B_1(0) \setminus \{0\}$ with a constant $C_0>0$.}
		$$
		Therefore, for every $\phi \in \mathbb{H}_0(\Omega) \subset \mathbb{H}(\R^n)$, we now have the estimate
		\begin{equation}
			\begin{split}
				\int_{\abs{x-z}\leq 1} \frac{\LC \phi(x)-\phi(z) \RC^2}{\abs{x-z}^n}\, dxdz 
				&\le C \int_{\R^n \times \R^n} \ell(|x-z|) \LC \phi(x)-\phi(z)\RC^2\, dxdz\\
				&= C \int_{\R^n}\log(1+|\xi|^2)|\widehat \phi(\xi)|^2\,d\xi\\
				&\le C \int_{\R^n} (1+\bigl|\log|\xi|\bigr|)|\widehat \phi(\xi)|^2\,d\xi\\
				&= C \|\phi\|_{\mathbb{H}(\R^n)}^2. 
			\end{split}
		\end{equation}
		Moreover, by (\ref{eq:log-schroedinger-comparison}) we have 
		\begin{equation*}
			\begin{split}
				\|\phi\|_{\mathbb{H}(\R^n)}^2  &= \int_{\R^n} (1+ \abs{\log|\xi|})|\widehat \phi(\xi)|^2\,d\xi \\
				&\le \int_{\R^n}\bigl(1 - 1_{B_1(0)}(\xi)\log |\xi| + \log(1+|\xi|^2)\bigr)|\widehat \phi(\xi)|^2\,d\xi\\
				&\le \|\widehat \phi\|_{L^2(\R^n)}^2+ \|\widehat \phi\|_{L^\infty(B_1(0)}^2 \int_{B_1(0)}(-\log |\xi|)d\xi\\
				&\quad \, +C \int_{\R^n \times \R^n} \ell(|x-z|) \LC \phi(x)-\phi(z) \RC^2\, dxdz\\
				&\le  \|\phi\|_{L^2(\Omega)}^2+ C \|\phi\|_{L^1(\Omega)}^2
				+C \int_{\abs{x-z}\leq 1} \frac{\LC \phi(x)-\phi(z)\RC^2}{|x-y|^n}\, dxdz\\
				&\quad \, + C \int_{\abs{x-z}> 1}\ell(x-z) \LC \phi^2(x)+\phi^2(z)\RC \, dxdz\\
				&\le C \Bigl(\|\phi\|_{L^2(\Omega)}^2 + \|\phi\|_{\mathbb{H}_0(\Omega)}^2 + \|\phi\|_{L^2(\R^n)}^2\int_{\R^n \setminus B_1(0)}
				\ell(y)\,dy \Bigr) \\
				&\le C \bigl(\|\phi\|_{L^2(\Omega)}^2 + \|\phi\|_{\mathbb{H}_0(\Omega)}^2\bigr),
			\end{split}
		\end{equation*}
		where $1_{B_1(0)}(\xi)=\begin{cases}
			1 &\text{ for }\xi \in B_1(0)\\
			0 &\text{ otherwise}
		\end{cases}$ is the characteristic function.
		Here we used the assumption that $\Omega$ has finite measure $\abs{\Omega}\in (0,\infty)$, such that the H\"older inequality 
		$$
		\|\phi\|_{L^1(\Omega)} \le |\Omega|^{1/2} \|\phi\|_{L^2(\Omega)}
		$$
		holds.
		Combining the above estimates with the obvious bound
		$$
		\|\phi\|_{L^2(\Omega)} = \|\phi\|_{L^2(\R^n)}= \|{\widehat \phi}\,\|_{L^2(\R^n)} \le \|\phi\|_{\mathbb{H}(\R^n)} \quad \text{for $\phi \in \mathbb{H}_0(\Omega)$}, 
		$$
		we deduce that
		$$
		\phi \mapsto \|\phi\|_* := \bigl(\|\phi\|_{L^2(\Omega)}^2 + \|\phi\|_{\mathbb{H}_0(\Omega)}^2\bigr)^{\frac{1}{2}}
		$$
		is an equivalent norm to $\|\cdot \|_{\mathbb{H}(\R^n)}$ on $\mathbb{H}_0(\Omega)$. Moreover, by \cite[Theorem 1.2]{jarohs-weth}, the embedding $(\mathbb{H}_0(\Omega),\|\cdot\|_*) \hookrightarrow L^2(\Omega)$ is compact. From this compactness and the fact that $\|\phi\|_{\mathbb{H}_0(\Omega)}>0$ for all $\phi \in \mathbb{H}_0(\Omega) \setminus \{0\}$, it follows by a standard argument that $\|\phi \|_{L^2(\Omega)} \le C \|\phi\|_{\mathbb{H}_0(\Omega)}$ for all $\phi \in \mathbb{H}_0(\Omega)$, so the norms $\|\cdot\|_*$ and $\|\cdot\|_{\mathbb{H}_0(\Omega)}$ are equivalent on $H^1_0(\Omega)$. This proves the result.
	\end{proof}	
	
	We also have the following useful estimate.

	\begin{lemma}
		\label{first-useful-est}
		For $\alpha>0$, we have $C^\alpha_c(\R^n) \subset \H(\R^n)$. Moreover, for every nonempty bounded open set $W \subset \R^n$, there exists a constant $C=C(W,\alpha)>0$ with 
		\begin{equation}\label{eq: estimate log f}
			\norm{f}_{\mathbb{H}(\R^n)} \leq C \norm{f}_{C^{\alpha}(W)} \qquad \text{for all $f \in C^\alpha_c(W)$.}
		\end{equation}
	\end{lemma}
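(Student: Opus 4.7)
The plan is to control the two pieces of $\|f\|_{\mathbb H(\R^n)}^2 = \|f\|_{L^2(\R^n)}^2 + \int_{\R^n} \bigl|\log|\xi|\bigr| |\widehat f(\xi)|^2\,d\xi$ separately, splitting the logarithmic integral at $|\xi|=1$ so that the negative singularity at the origin and the slow growth at infinity can be treated with different tools. Since $f$ is supported in $W$, the $L^2$ piece is immediate from $\|f\|_{L^2(\R^n)}^2 \leq |W|\,\|f\|_{L^\infty(W)}^2 \leq |W|\,\|f\|_{C^\alpha(W)}^2$. The low-frequency contribution $\int_{|\xi|\le 1}(-\log|\xi|)|\widehat f(\xi)|^2\,d\xi$ is then handled by the trivial bound $|\widehat f(\xi)| \le \|f\|_{L^1(\R^n)} \le |W|\,\|f\|_{C^\alpha(W)}$ together with the local integrability of $-\log|\xi|$ at the origin.

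The high-frequency piece $\int_{|\xi|> 1}\log|\xi| |\widehat f(\xi)|^2\,d\xi$ is the substantial one. Here I would use the pointwise inequality $\log|\xi|\le \log(1+|\xi|^2)$ and then recycle the identity \eqref{eq:log-schroedinger-comparison} from the proof of Lemma~\ref{equivalent-norms}, namely
\[
\int_{\R^n}\log(1+|\xi|^2)|\widehat f(\xi)|^2\,d\xi = \int_{\R^n\times\R^n}\ell(|x-y|)(f(x)-f(y))^2\,dx\,dy,
\]
where the Bessel-type kernel $\ell$ satisfies $\ell(|z|) \lesssim |z|^{-n}$ near the origin and decays exponentially at infinity. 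I would split this spatial double integral at $|x-y|=1$. On $\{|x-y|\le 1\}$, the H\"older estimate $(f(x)-f(y))^2 \le \|f\|_{C^\alpha(W)}^2\,|x-y|^{2\min(\alpha,1)}$ combined with $\ell(|x-y|)\lesssim |x-y|^{-n}$ produces the integrable density $|x-y|^{2\min(\alpha,1)-n}$, which, after integration over a bounded neighbourhood of $W$, yields a multiple of $\|f\|_{C^\alpha(W)}^2$. On $\{|x-y|>1\}$, the crude bound $(f(x)-f(y))^2 \le 2f(x)^2+2f(y)^2$ and the exponential decay of $\ell$ reduce the integral to a multiple of $\|f\|_{L^2(\R^n)}^2$, already under control.

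The main subtle point to anticipate is that the naive pointwise decay $|\widehat f(\xi)|\lesssim \|f\|_{C^\alpha(W)} |\xi|^{-\min(\alpha,1)}$ coming directly from H\"older regularity would control $\int_{|\xi|>1}\log|\xi||\widehat f(\xi)|^2\,d\xi$ only when $2\min(\alpha,1) > n$, which is too restrictive. The comparison with the Schr\"odinger symbol $\log(1+|\xi|^2)$ and its Bessel-kernel realisation sidesteps this obstruction: the resulting spatial integrand is finite for \emph{every} $\alpha>0$ and \emph{every} dimension $n$. Combining the four estimates above yields the required bound with a constant $C=C(W,\alpha,n)>0$.
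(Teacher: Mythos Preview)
Your argument is correct and complete; each of the four pieces is estimated as you describe, and the near-diagonal spatial integral is indeed convergent for every $\alpha>0$ in every dimension.

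The paper proceeds differently. Rather than working directly with the Fourier-side norm, it first invokes Lemma~\ref{equivalent-norms} to replace $\|\cdot\|_{\mathbb H(\R^n)}$ by the equivalent purely spatial norm $\|\cdot\|_{\mathbb H_0(W)}$ (enlarging $W$ to a ball if necessary). That norm admits the decomposition
\[
\|f\|_{\mathbb H_0(W)}^2 = \int_{\substack{x,z\in W\\|x-z|<1}} \frac{(f(x)-f(z))^2}{|x-z|^n}\,dx\,dz + 2\int_W |f(x)|^2 \kappa_W(x)\,dx,
\]
with $\kappa_W(x)=\int_{(\R^n\setminus\overline W)\cap B_1(x)}|y-x|^{-n}\,dy$. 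The first term is handled by the same H\"older estimate you use; the second by the elementary bound $\kappa_W(x)\le |\mathbb S^{n-1}|[\log(1/\dist(x,\partial W))]_+$ and the integrability of $\log(1/(R-|x|))$ over $B_R$. So the paper never returns to the Bessel-kernel identity \eqref{eq:log-schroedinger-comparison}: that tool is packaged entirely into the norm-equivalence lemma, after which only the bare kernel $|x-z|^{-n}$ appears. Your route instead re-invokes \eqref{eq:log-schroedinger-comparison} directly and handles low frequencies separately via the $L^1$ bound on $\widehat f$, which trades the boundary term $\kappa_W$ for a frequency split. Both arguments are of comparable length; the paper's is marginally cleaner once Lemma~\ref{equivalent-norms} is in hand, while yours is slightly more self-contained in that it does not need the full norm equivalence, only the one-sided comparison with the Schr\"odinger symbol.
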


	\begin{proof}
		It clearly suffices to prove (\ref{eq: estimate log f}). Since $W \subset B_R(0)$ for $R$ chosen sufficiently large, we may assume without loss of generality that $W= B_R(0)$ for some $R>1$. 
		Since $\norm{\cdot}_{\mathbb{H}(\R^n)}$ is equivalent to $\norm{\cdot}_{\mathbb{H}_0(W)}$ on $\mathbb{H}_0(W)$, it suffices, by approximation, to prove the estimate with $\norm{\cdot}_{\mathbb{H}_0(W)}$ in place of $\norm{\cdot}_{\mathbb{H}(\R^n)}$.
		
		Let $f \in C^\alpha_c(W)$, then we can write,  similar to \cite[Proposition 3.2]{CW2019dirichlet}, 
		\begin{equation}\label{first-useful-est-proof-0}
			\begin{split}
				\norm{f}_{\mathbb{H}_0(W)}^2 = \int_{\stackrel{x,z \in W}{\abs{x-z}<1}} \frac{(f(x)-f(z))^2}{|x-z|^n}\,dx dz
				+2 \int_{W}|f(x)|^2 \kappa_{W}(x)\,dx
			\end{split}
		\end{equation}
		with
		\begin{equation}
			\label{eq:kappa-omega}
			\kappa_{W}(x)= \int_{(\R^n\setminus \overline{W}) \cap B_1(x)}|y-x|^{-n}\, dx.
		\end{equation}
		It is easy to compute that
		$$
		\kappa_{W}(x) \le \left|\mathbb{S}^{n-1}\right| \Bigl[\log \frac{1}{\dist(x,\partial \Omega)}\Bigr]_+ \le
		\left|\mathbb{S}^{n-1}\right| \Bigl[\log \frac{1}{R-|x|}\Bigr]_+,  
		$$
		for  $x \in W= B_R(0)$, which implies that
		\begin{equation}\label{first-useful-est-proof-1}
			\begin{split}
				\int_{W}|f(x)|^2 \kappa_{W}(x)\,dx & \le |\mathbb S^{n-1}|\|f\|_{L^\infty(W)}^2\int_{B_R(0)} \Bigl[\log \frac{1}{R-|x|}\Bigr]_+dx \\
				&\le
				|\mathbb S^{n-1}|^2 \|f\|_{C^\alpha(W)}^2\int_{R-1}^{R} r^{n-1}\log \frac{1}{R-r}\, dr  \\
				&\le C\|f\|_{C^\alpha(W)}^2,
			\end{split}	
		\end{equation}
		with a constant $C=C(W)>0$. Moreover, we have
		\begin{equation}\label{first-useful-est-proof-2}
			\begin{split}
				\int_{\stackrel{x,z \in W}{\abs{x-z}<1}} \frac{(f(x)-f(z))^2}{|x-z|^n}\, dxdz &\le \|f\|_{C^\alpha(W)}^2 \int_{\stackrel{x,z \in W}{\abs{x-z}<1}}|x-z|^{\alpha-n} \, dxdz \\
				&\le |W| \|f\|_{C^\alpha(W)}^2 \int_{B_1(0)}|y|^{\alpha-n}\, dy \\
				&\le \frac{|W||\mathbb{S}^{n-1}|}{\alpha} \|f\|_{C^\alpha(W)}^2.
			\end{split}
		\end{equation}
		Combining~\eqref{first-useful-est-proof-0}, \eqref{first-useful-est-proof-1} and \eqref{first-useful-est-proof-2}, we obtain \eqref{eq: estimate log f} with a constant $C=C(W,\alpha)>0$. This proves the assertion.
	\end{proof}

	Throughout the remainder of this paper, let us denote the $L^2$-inner product by
	\begin{equation}
		\LC \phi, \psi\RC_{L^2(A)}\vcentcolon = 	\int_{A} \phi \psi \, dx,
	\end{equation}
	for any $\phi,\psi\in L^2(A)$ and for any $A\subset \R^n$.
	
	\subsection{The energy form of the logarithmic Schrödinger type operator}

	Let $\Omega \subset \R^n$ be a bounded open set with Lipschitz boundary and $q \in L^\infty(\Omega)$. We consider the bilinear form associated with the operator $\loglap +q$ given by
	\begin{equation}
		\label{eq:def-bilinear-form-loglap-q}
		\begin{split}
			B_q & : \mathbb{H}(\R^n) \times \mathbb{H}(\R^n) \to \R, \\
			B_q(v,w)&:=B_0(v,w)+ \LC q v , w \RC_{L^2(\Omega)},
		\end{split}
	\end{equation}
	where $B_0$ is defined in (\ref{eq:def-bilinear-form-loglap}). We recall that
	the variational characterization of $\lambda_1(\Omega)$, the first Dirichlet-eigenvalue of $\loglap$ on $\Omega$, is then given by
	\begin{equation}
		\label{eq:var-char-lambda-1}
		\lambda_1(\Omega) = \inf_{\stackrel{u \in \H_0(\Omega)}{u \not = 0}}\frac{B_0(u,u)}{\|u\|_{L^2(\Omega)}^2}.  
	\end{equation}
	Hence the eigenvalue condition, if satisfied, implies that 
	\begin{equation}
		\label{eq:eigenvalue-reformulated}
		B_q(u,u) \ge \lambda_0 \|u\|_{L^2(\Omega)}^2 \qquad \text{for all $u \in \H_0(\Omega)$.}  
	\end{equation}
	We also note the following useful estimates.
	
	\begin{lemma}
		\label{B-q-estimates}
		Let $B_q(\cdot, \cdot)$ be the bilinear form given by \eqref{eq:def-bilinear-form-loglap-q}, then we have
		\begin{equation}
			\label{eq:B-q-est-1}
			|B_q(u,v)| \le (2+\|q\|_{L^\infty(\Omega)})  \|u\|_{\H(\R^n)}\|v\|_{\H(\R^n)} \quad \text{for $u,v \in \H(\R^n)$.}
		\end{equation}
		and
		\begin{equation}
			\label{eq:B-q-est-2}
			B_q(u,u) \ge 2 \|u\|_{\H(\R^n)}^2 - C \|u\|_{L^2(\Omega)}^2 \qquad \text{for $u \in \H_0(\Omega)$}
		\end{equation}
		with a constant $C>0$.  If moreover (\ref{eigenvalue}) holds, then 
		\begin{equation}
			\label{eq:B-q-est-3}
			B_q(u,u) \ge C \|u\|_{\H(\R^n)}^2 \qquad \text{for $u \in \H_0(\Omega)$} 
		\end{equation}
		with a constant $C>0$.
	\end{lemma}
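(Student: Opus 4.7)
My plan is to address the three inequalities separately, with the bulk of the work going into (\ref{eq:B-q-est-2}); estimates (\ref{eq:B-q-est-1}) and (\ref{eq:B-q-est-3}) will then follow fairly quickly.

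For (\ref{eq:B-q-est-1}), I would simply apply the Cauchy--Schwarz argument that already appears in (\ref{B-0-upper-bound}) to obtain $|B_0(u,v)|\le 2\|u\|_{\H(\R^n)}\|v\|_{\H(\R^n)}$ (the factor $2$ coming from the definition of $B_0$), and then bound the potential term via $|\LC qu,v\RC_{L^2(\Omega)}|\le \|q\|_{L^\infty(\Omega)}\|u\|_{L^2(\Omega)}\|v\|_{L^2(\Omega)}$. Since the obvious inequality $\|\cdot\|_{L^2(\Omega)}\le \|\cdot\|_{\H(\R^n)}$ follows directly from the Plancherel identity and the definition of $\|\cdot\|_{\H(\R^n)}$, summing the two bounds yields the stated constant $2+\|q\|_{L^\infty(\Omega)}$.

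The main work is in (\ref{eq:B-q-est-2}). The idea is to algebraically split the symbol: writing $\log |\xi| = |\log|\xi|| - 2|\log|\xi|| \mathbf{1}_{B_1(0)}(\xi)$ gives
\begin{equation*}
B_0(u,u) = 2\int_{\R^n} |\log|\xi||\,|\widehat u(\xi)|^2\,d\xi - 4\int_{B_1(0)} |\log|\xi||\,|\widehat u(\xi)|^2\,d\xi = 2\|u\|_{\H(\R^n)}^2 - 2\|u\|_{L^2(\R^n)}^2 - 4\int_{B_1(0)} |\log|\xi||\,|\widehat u(\xi)|^2\,d\xi.
\end{equation*}
For $u \in \H_0(\Omega)$ one has $\|u\|_{L^2(\R^n)} = \|u\|_{L^2(\Omega)}$, and the remaining integral over $B_1(0)$ is handled by estimating $\|\widehat u\|_{L^\infty(\R^n)}\le \|u\|_{L^1(\R^n)} = \|u\|_{L^1(\Omega)}\le |\Omega|^{1/2}\|u\|_{L^2(\Omega)}$ (using $\supp u\subset \overline\Omega$ and the H\"older inequality, as $\Omega$ has finite measure) together with the elementary fact that $\int_{B_1(0)}|\log|\xi||\,d\xi<\infty$. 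This gives $\int_{B_1(0)} |\log|\xi||\,|\widehat u(\xi)|^2\,d\xi \le C\|u\|_{L^2(\Omega)}^2$, and hence $B_0(u,u)\ge 2\|u\|_{\H(\R^n)}^2 - C\|u\|_{L^2(\Omega)}^2$. Adding the potential term using $|\LC qu,u\RC_{L^2(\Omega)}|\le \|q\|_{L^\infty(\Omega)}\|u\|_{L^2(\Omega)}^2$ absorbs into the same lower-order term and yields (\ref{eq:B-q-est-2}).

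For (\ref{eq:B-q-est-3}), I would combine (\ref{eq:B-q-est-2}) with the reformulated eigenvalue estimate (\ref{eq:eigenvalue-reformulated}) by forming a convex combination: for $t\in(0,1)$,
\begin{equation*}
B_q(u,u) = tB_q(u,u) + (1-t)B_q(u,u) \ge 2t\|u\|_{\H(\R^n)}^2 + \bigl((1-t)\lambda_0 - tC\bigr)\|u\|_{L^2(\Omega)}^2.
\end{equation*}
Choosing $t = \lambda_0/(\lambda_0 + C)$ makes the $L^2$ coefficient vanish and leaves $B_q(u,u)\ge 2t\|u\|_{\H(\R^n)}^2$, giving the desired coercivity constant. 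I don't anticipate any serious obstacle; the only subtle point is identifying the correct splitting of the symbol $\log|\xi|$ into its positive and negative parts in step two, which is precisely what allows the $\|u\|_{\H(\R^n)}^2$ term to appear with the sharp constant $2$.
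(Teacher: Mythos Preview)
Your proposal is correct and follows essentially the same route as the paper: the same Cauchy--Schwarz bound for (\ref{eq:B-q-est-1}), the same splitting $\log|\xi| = |\log|\xi|| + 2\log|\xi|\,\mathbf{1}_{B_1(0)}$ together with the $\|\widehat u\|_{L^\infty}\le \|u\|_{L^1(\Omega)}\le |\Omega|^{1/2}\|u\|_{L^2(\Omega)}$ estimate for (\ref{eq:B-q-est-2}), and the same convex-combination interpolation between (\ref{eq:eigenvalue-reformulated}) and (\ref{eq:B-q-est-2}) for (\ref{eq:B-q-est-3}). The only cosmetic difference is that you give an explicit value $t=\lambda_0/(\lambda_0+C)$ where the paper merely asserts that such a parameter exists.
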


	\begin{proof}
		For $u,v \in \H(\R^n)$ we have, by (\ref{B-0-upper-bound}),
		\begin{align*}
			|B_q(u,v)| &\le |B_0(u,v)| + |\LC q v , w \RC_{L^2(\Omega)}| \\
			&\le 2\int_{\R^n}|\log |\xi|| |\widehat{u}(\xi)| |\widehat{ v}(\xi)|d\xi + \|q\|_{L^\infty(\Omega)}\|u\|_{L^2(\Omega)}\|v\|_{L^2(\Omega)}\\
			&\le 2 \|u\|_{\H(\R^n)}\|v\|_{\H(\R^n)} + \|q\|_{L^\infty(\Omega)}\|u\|_{L^2(\R^n)}\|v\|_{L^2(\R^n)}\\
			&\le (2 + \|q\|_{L^\infty(\Omega)})\|u\|_{\H(\R^n)}\|v\|_{\H(\R^n)}, 
		\end{align*}
		as claimed in (\ref{eq:B-q-est-1}). Moreover, we have
		\begin{align*}
			B_0(u,u) &= 2\int_{\R^n}\log|\xi| |\widehat{u}(\xi)|^2\,d\xi \\
			&= 2 \int_{\R^n}(1+ \bigl| \log|\xi|\bigr|) |\widehat{u}(\xi)|^2\,d\xi  -2 \int_{\R^n}|\widehat{u}(\xi)|^2\,d\xi  \\
			&\quad \, +  4 \int_{B_1(0)}(\log|\xi|)|\widehat{u}(\xi)|^2\,d\xi\\
			&\geq  2\|u\|_{\H(\R^n)}^2 -2\|u\|_{L^2(\Omega)}^2 +4\|\widehat{u}\|_{L^\infty(\R^n)}^2 \int_{B_1(0)} \log|\xi| \, d\xi\\
			&\ge 2\|u\|_{\H(\R^n)}^2 -2\|u\|_{L^2(\Omega)}^2 -C \|u\|_{L^1(\Omega)}^2\\
			&\ge 2\|u\|_{\H(\R^n)}^2 -C \|u\|_{L^2(\Omega)}^2, \quad \text{for $u \in \H_0(\Omega)$.} 
		\end{align*}
		Here we used again the fact that $\|u\|_{L^1(\Omega)} \le C \|u\|_{L^2(\Omega)}$ thanks to $\Omega$ has finite measure. We conclude that 
		$$
		B_q(u,u) \ge  B_0(u,u) -\|q\|_{L^\infty(\Omega)}\|u\|_{L^2(\Omega)}^2
		\ge 2 \|u\|_{\H(\R^n)}^2 - C \|u\|_{L^2(\Omega)}^2,
		$$
		for $u \in \H_0(\Omega)$, as claimed in (\ref{eq:B-q-est-2}).
		
		Finally, if (\ref{eigenvalue}) holds, then by (\ref{eq:eigenvalue-reformulated}) and (\ref{eq:B-q-est-2}) we have, for $u \in \H_0(\Omega)$ and $\eps \in (0,1)$,
		\begin{align*}
			B_q(u,u) &= (1-\eps)B_q(u,u) + \eps B_q(u,u)\\
			&\ge (1-\eps)\lambda_0\|u\|_{L^2(\Omega)}^2 + \eps \big(2\|u\|_{\H(\R^n)}^2 - C \|u\|_{L^2(\Omega)}^2\big)\\
			&= 2 \eps \|u\|_{\H(\R^n)}^2 + \big((1-\eps)\lambda_0 - C\eps\big)\|u\|_{L^2(\Omega)}^2.
		\end{align*}
		Since $\lambda_0>0$, we can then choose $\eps \in (0,1)$ such that $(1-\eps)\lambda_0 - C\eps \ge 0$. Then (\ref{eq:B-q-est-3}) holds with $C = 2 \eps$. This concludes the proof.
	\end{proof}
	
	\section{The forward problem}\label{sec: forward problem}

	Let $\Omega \subset \R^n$ be a bounded open set with Lipschitz boundary, and let $q \in L^\infty(\Omega)$ satisfy \eqref{eigenvalue}. In this section, we study the forward Dirichlet problem for the logarithmic Schr\"odinger type equation $\LC \loglap  + q \RC u = F$ in $\Omega$. For this, we need some preparations. We define the trace space
	$$
	\H_T(\Omega_e):= \big\{f\big|_{\Omega_e} \::\: f \in \H(\R^n)\big\}.
	$$
	To define a suitable norm on $\H_T(\Omega_e)$, we note the following result.
	\begin{lemma}
		\label{minimal extension}
		For every function $f \in \H_T(\Omega_e)$, the infimum 
		\begin{equation}
			\label{eq:infimum-energy-extension}
			c_f:= \inf \{\|g\|_{\H(\R^n)} \::\: g \in \H(\R^n), \: g\big|_{\Omega_e} = f\}
		\end{equation}
		admits a minimizer $\tilde f \in \H(\R^n)$. Moreover, $\tilde f$ is uniquely determined by the property that
		\begin{equation}
			\label{eq:unique-determination}
			\big\langle \tilde f, h \big\rangle_{\H(\R^n)} = 0 \qquad \text{for all $h \in \H_0(\Omega)$,}  
		\end{equation}
		and the map $f \mapsto \tilde f$ is linear. 
	\end{lemma}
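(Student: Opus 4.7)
The plan is to recognize this as a standard Hilbert-space projection statement: the admissible set of extensions is a closed affine subspace of $\H(\R^n)$, and $\tilde f$ is just the point of smallest norm on it. Concretely, pick any $g_0 \in \H(\R^n)$ with $g_0\big|_{\Omega_e} = f$ (which exists by the very definition of $\H_T(\Omega_e)$). Then
\[
E_f := \{g \in \H(\R^n)\,:\, g\big|_{\Omega_e}=f\} = g_0 + \H_0(\Omega),
\]
which is a (nonempty) closed affine subspace of $\H(\R^n)$, since $\H_0(\Omega)$ is the closed subspace of $\H(\R^n)$ defined in \eqref{eq: H(Omega) space} — the vanishing condition $u = 0$ in $\Omega_e$ is preserved under $\H(\R^n)$-convergence by the continuous embedding $\H(\R^n)\hookrightarrow L^2(\R^n)$.

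Next, I would apply the classical projection theorem: on any nonempty closed convex subset of a Hilbert space, the norm attains a unique minimum. Applied to $E_f$, this produces a unique $\tilde f \in E_f$ with $\|\tilde f\|_{\H(\R^n)} = c_f$, establishing both existence and uniqueness of the minimizer. To derive the characterization \eqref{eq:unique-determination}, I would argue directly: for any $h \in \H_0(\Omega)$ and $t \in \R$ the function $\tilde f + t h$ lies in $E_f$, so
\[
\|\tilde f\|_{\H(\R^n)}^2 \le \|\tilde f + th\|_{\H(\R^n)}^2 = \|\tilde f\|_{\H(\R^n)}^2 + 2t\langle \tilde f, h\rangle_{\H(\R^n)} + t^2 \|h\|_{\H(\R^n)}^2.
\]
Dividing by $t$ and letting $t\to 0^\pm$ gives $\langle \tilde f, h\rangle_{\H(\R^n)}=0$. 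Conversely, if $\tilde f \in E_f$ satisfies \eqref{eq:unique-determination}, then for any $g = \tilde f + h \in E_f$ with $h \in \H_0(\Omega)$,
\[
\|g\|_{\H(\R^n)}^2 = \|\tilde f\|_{\H(\R^n)}^2 + 2\langle \tilde f, h\rangle_{\H(\R^n)} + \|h\|_{\H(\R^n)}^2 = \|\tilde f\|_{\H(\R^n)}^2 + \|h\|_{\H(\R^n)}^2 \ge c_f,
\]
so $\tilde f$ is indeed the minimizer, and this also uniquely singles it out inside $E_f$.

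Finally, linearity of $f \mapsto \tilde f$ falls out of this characterization: given $f_1, f_2 \in \H_T(\Omega_e)$ and $\alpha,\beta \in \R$, the element $\alpha\tilde f_1 + \beta \tilde f_2$ belongs to $E_{\alpha f_1 + \beta f_2}$ and is orthogonal to $\H_0(\Omega)$ by bilinearity of the inner product; by the uniqueness part, it therefore coincides with $\widetilde{\alpha f_1 + \beta f_2}$.

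Honestly, no step here is an obstacle — this is textbook orthogonal projection in a Hilbert space. The only point that could require a line of justification is the closedness of $\H_0(\Omega)$ in $\H(\R^n)$, but this is already implicit in the definition \eqref{eq: H(Omega) space} (and in the statement, immediately after it, that $\H_0(\Omega)$ is itself a Hilbert space). Everything else is a direct application of the projection theorem and its variational characterization.
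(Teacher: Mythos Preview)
Your proof is correct and follows essentially the same approach as the paper: both recognize the set of extensions as the closed affine subspace $g_0 + \H_0(\Omega)$ and identify $\tilde f$ as its element of minimal norm, deriving the orthogonality condition \eqref{eq:unique-determination} by the same first-variation argument. The only cosmetic difference is that the paper carries out the existence step by hand (minimizing sequence, weak compactness, weak lower semicontinuity of the norm) rather than invoking the projection theorem as a black box, and phrases uniqueness as $\tilde f - \tilde f_* \in \H_0(\Omega) \cap \H_0(\Omega)^\perp = \{0\}$; these are equivalent presentations of the same argument.
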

	
	\begin{proof}
		Let $f \in \H_T(\Omega_e)$  By definition of $\H_T(\Omega_e)$, the set 
		$$
		M_f:= \big\{ g \in \H(\R^n)\,: \, g\big|_{\Omega_e} = f \big\}
		$$ 
		is nonempty. Let $\LC f_k \RC_{k\in \N}$ be a minimizing sequence in $M_f$ for the infimum in (\ref{eq:infimum-energy-extension}). Since $f_k$ is bounded in $\H(\R^n)$, we may pass to a subsequence such that $f_k \rightharpoonup \tilde f \in \H(\R^n)$ and therefore
		$$
		\|\tilde f\|_{\H(\R^n)} \le \lim_{k \to \infty}  \|f_k\|_{\H(\R^n)} = c_f.
		$$
		Moreover, we have $h_k:= f_k-f_1$ in $\H_0(\Omega)$ for all $k \in \N$, and
		$h_k \rightharpoonup \tilde f-f_1$. Since $\H_0(\Omega)\subset \H(\R^n)$ is a closed subspace, it follows that $\tilde f-f_1 \in \H_0(\Omega)$, hence $\tilde f \in M_f$ and $\tilde f$ is a minimizer for (\ref{eq:infimum-energy-extension}). It then follows that for any $h \in \H_0(\Omega)$ we have
		$$
		0 \le \lim_{t \to 0} \frac{1}{t}\bigl(\|\tilde f \pm t h \|_{\H(\R^n)}^2 - \|\tilde f\|_{\H(\R^n)}^2\bigr) = \pm 2   \langle \tilde f, h \rangle_{\H(\R^n)}
		$$
		and therefore (\ref{eq:unique-determination}) follows. Moreover, if $\tilde f_* \in M_f$ is another function satisfying (\ref{eq:unique-determination}), then $\tilde f - \tilde f_* \in \H_0(\Omega) \cap \bigl(\H_0(\Omega)\bigr)^\perp = \{0\}$ and therefore $\tilde f = \tilde f_*$. This shows that $\tilde f$ is uniquely determined by (\ref{eq:unique-determination}), and from this, the linearity of the map $f \to \tilde f$ follows. This proves the assertion
	\end{proof}
	
	Lemma~\ref{minimal extension} shows that a norm $\|\cdot\|_{\H_T(\Omega_e)}$ is well-defined by setting
	\begin{equation}
		\label{eq:def-trace-norm}
		\begin{split}
			\|f\|_{\H_T(\Omega_e)}: = \|\tilde f\|_{\H(\R^n)}=  \inf \{\|g\|_{\H(\R^n)} \::\: g \in \H(\R^n), \: g\big|_{\Omega_e} = f\},
		\end{split}
	\end{equation}
	for $f \in \H_T(\Omega_e)$. 
	
	\subsection{The Dirichlet problem}

	For $f \in \H_T(\Omega_e)$ and $F \in L^2(\Omega)$, we now consider the Dirichlet problem
	\begin{equation}\label{eq: well-posedness}
		\begin{cases}
			\LC \loglap +q \RC  u =F &\text{ in }\Omega, \\
			u=f &\text{ in }\Omega_e.
		\end{cases}
	\end{equation}
	To define the notion of weak solutions, we use the bilinear form $B_q$ defined in (\ref{eq:def-bilinear-form-loglap-q}).  
	
	\begin{definition}[Weak solutions]
		Let $\Omega \subset \R^n$ be a bounded open set with Lipschitz boundary. Given $f \in \mathbb{H}_T(\Omega_e)$ and $F \in L^2(\Omega)$, a function $u\in \mathbb{H}(\R^n)$ is called a weak solution to \eqref{eq: well-posedness} if $u \equiv f$ in $\Omega_e$ and 
		\begin{equation}
			B_q (u,\varphi) = \LC F, \varphi \RC_{L^2(\Omega)} ,\quad \text{for any $\varphi \in \mathbb{H}_0(\Omega)$.} 
		\end{equation}
	\end{definition}

	We then have the following well-posedness result. 
	
	\begin{lemma}[Well-posedness]\label{lem: well-posedness}
		Let $\Omega  \subset \R^n$ be a bounded open set with Lipschitz boundary, and $ q\in L^\infty(\Omega)$ satisfy \eqref{eigenvalue}. Then, for any $f \in \H_T(\Omega_e)$ and $F \in L^2(\Omega)$, there exists a unique weak solution $u\in \mathbb{H}(\R^n)$ to \eqref{eq: well-posedness}. In addition, there holds 
		\begin{equation}\label{eq: well-posed estimate}
			\norm{u}_{\mathbb{H}(\R^n)}\leq C \LC \norm{F}_{L^2(\Omega)} + \|f\|_{\H_T(\Omega_e)}\RC,
		\end{equation}
		for some constant $C>0$ independent of $u$, $F$, $f$.
	\end{lemma}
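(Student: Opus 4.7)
The plan is to reduce the inhomogeneous exterior Dirichlet problem to a homogeneous one on $\H_0(\Omega)$ and then apply the Lax--Milgram lemma, using the coercivity and continuity estimates of Lemma~\ref{B-q-estimates}.

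First, I would invoke Lemma~\ref{minimal extension} to pick the minimal extension $\tilde f \in \H(\R^n)$ with $\tilde f\big|_{\Omega_e} = f$ and $\|\tilde f\|_{\H(\R^n)} = \|f\|_{\H_T(\Omega_e)}$. Then $u \in \H(\R^n)$ is a weak solution of (\ref{eq: well-posedness}) if and only if $v:= u - \tilde f \in \H_0(\Omega)$ satisfies
\begin{equation*}
B_q(v,\varphi) = \LC F, \varphi\RC_{L^2(\Omega)} - B_q(\tilde f, \varphi) \qquad \text{for all $\varphi \in \H_0(\Omega)$.}
\end{equation*}
So it suffices to prove unique solvability of this problem in $\H_0(\Omega)$, together with an appropriate a priori bound.

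Next, I would verify the hypotheses of Lax--Milgram on the Hilbert space $\H_0(\Omega)$ endowed with the norm inherited from $\H(\R^n)$. Continuity of $B_q$ on $\H_0(\Omega)\times \H_0(\Omega)$ follows from (\ref{eq:B-q-est-1}), and coercivity from (\ref{eq:B-q-est-3}), which relies on the eigenvalue condition (\ref{eigenvalue}). For the right-hand side, the linear functional
\begin{equation*}
\varphi \mapsto \ell(\varphi):= \LC F, \varphi\RC_{L^2(\Omega)} - B_q(\tilde f, \varphi)
\end{equation*}
is continuous on $\H_0(\Omega)$: the first term is bounded by $\|F\|_{L^2(\Omega)}\|\varphi\|_{L^2(\Omega)} \le C \|F\|_{L^2(\Omega)}\|\varphi\|_{\H(\R^n)}$ since $\H_0(\Omega) \hookrightarrow L^2(\Omega)$ via Lemma~\ref{equivalent-norms}, while the second is bounded by $C\|\tilde f\|_{\H(\R^n)}\|\varphi\|_{\H(\R^n)}$ by (\ref{eq:B-q-est-1}). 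Hence Lax--Milgram produces a unique $v \in \H_0(\Omega)$ with $B_q(v,\varphi) = \ell(\varphi)$ for all $\varphi \in \H_0(\Omega)$, and therefore a unique weak solution $u = \tilde f + v \in \H(\R^n)$.

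Finally, for the estimate (\ref{eq: well-posed estimate}), I would test the equation for $v$ against $\varphi = v$, apply coercivity on the left and Cauchy--Schwarz together with (\ref{eq:B-q-est-1}) on the right to get $\|v\|_{\H(\R^n)} \le C(\|F\|_{L^2(\Omega)} + \|\tilde f\|_{\H(\R^n)})$, then use $\|u\|_{\H(\R^n)} \le \|v\|_{\H(\R^n)} + \|\tilde f\|_{\H(\R^n)}$ together with $\|\tilde f\|_{\H(\R^n)} = \|f\|_{\H_T(\Omega_e)}$. Uniqueness also follows directly: the difference of two solutions lies in $\H_0(\Omega)$ and is annihilated by $B_q(\cdot,\cdot)$, so coercivity forces it to vanish. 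The argument is essentially routine; the only mildly delicate point is to make sure that the right-hand side functional is well-defined on $\H_0(\Omega)$ and controlled by $\|f\|_{\H_T(\Omega_e)}$ rather than by any particular extension, which is precisely what the minimal-extension property of Lemma~\ref{minimal extension} provides.
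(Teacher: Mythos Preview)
Your proposal is correct and follows essentially the same route as the paper: reduce to a homogeneous problem via the minimal extension $\tilde f$ from Lemma~\ref{minimal extension}, apply Lax--Milgram on $\H_0(\Omega)$ using the continuity and coercivity estimates (\ref{eq:B-q-est-1}) and (\ref{eq:B-q-est-3}), and derive the a priori bound by testing with $v$. The only cosmetic difference is that you invoke Lemma~\ref{equivalent-norms} for the embedding $\H_0(\Omega)\hookrightarrow L^2(\Omega)$, whereas $\|\varphi\|_{L^2(\Omega)}\le \|\varphi\|_{\H(\R^n)}$ already follows directly from the definition of the $\H(\R^n)$-norm.
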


	\begin{proof}[Proof of Lemma \ref{lem: well-posedness}]
		Let $\tilde f$ be the $\|\cdot\|_{\H(\R^n)}$-minimizing extension of $f$ given by Lemma~\ref{minimal extension}, which satisfies $\|\tilde f\|_{\H(\R^n)} = \|f\|_{\H_T(\Omega_e)}$. Then
		$u\in \mathbb{H}(\R^n)$ is a weak solution to \eqref{eq: well-posedness} if and only if $v= u-\tilde f \in \mathbb{H}_0(\Omega)$, and $v$ satisfies
		\begin{equation}
			\label{weak-sol-transformed}
			B_q (v,\varphi) = \LC F, \varphi \RC_{L^2(\Omega)}-B_q(\tilde f,\varphi) ,\quad \text{for any $\varphi \in \mathbb{H}_0(\Omega)$.} 
		\end{equation}
		The existence of a unique $v \in \mathbb{H}_0(\Omega)$ with this property follows from the Lax-Milgram theorem, since, by Lemma~\ref{B-q-estimates}, $B_q$ is a continuous bilinear form on $\mathbb{H}_0(\Omega)$ satisfying (\ref{eq:B-q-est-3}). This shows the existence of a unique weak solution $u\in \mathbb{H}(\R^n)$ to \eqref{eq: well-posedness}, and $u = v + \tilde f$ with $v \in \mathbb{H}_0(\Omega)$ as above. Moreover, by (\ref{eq:B-q-est-3}) and (\ref{eq:B-q-est-1}) we have, with a constant $C>0$, 
		\begin{align*}
			\|v\|_{\H(\R^n)}^2 &\le C B_q(v,v)\\
			&= C\bigl( \LC F, v \RC_{L^2(\Omega)}-B_q(\tilde f,v)\bigr)\\
			&\le C \bigl(\|v\|_{L^2(\Omega)} \|F\|_{L^2(\Omega)} + \|\tilde f\|_{\H(\R^n)}\|v\|_{\H(\R^n)}\bigr)\\
			&\le C \|v\|_{\H(\R^n)} \bigl( \|F\|_{L^2(\Omega)} + \|\tilde f\|_{\H(\R^n)}\bigr)
		\end{align*}
		and therefore
		\begin{equation}
			\label{eq:v-estimate}
			\|v\|_{\H(\R^n)} \le C \bigl(\|F\|_{L^2(\Omega)} + \|\tilde f\|_{\H(\R^n)}\bigr).    \end{equation}
		Consequently, 
		\begin{align*}
			\|u\|_{\H(\R^n)} & \le \|v\|_{\H(\R^n)} + \|\tilde f\|_{\H(\R^n)} \\ 
			&\le C \bigl(\|F\|_{L^2(\Omega)} +\|\tilde f\|_{\H(\R^n)}\bigr)\\
			&= C \bigl(\|F\|_{L^2(\Omega)} +\|f\|_{\H_T(\Omega_e)}\bigr),
		\end{align*}
		as claimed.
	\end{proof}
	
	\begin{corollary}
		\label{cor: well-posedness}
		Let $\Omega \subset \R^n$ be a bounded open set with Lipschitz boundary, let $ q\in L^\infty(\Omega)$ satisfy \eqref{eigenvalue}, and let $W \subset \Omega_e$ be a nonempty bounded open set. Then for every $f \in C^\alpha_c(W)$, there exists a unique weak solution $u\in \mathbb{H}(\R^n)$ to \eqref{eq: well-posedness}. In addition, there holds 
		\begin{equation}\label{eq: well-posed estimate-corollary}
			\norm{u}_{\mathbb{H}(\R^n)}\leq C \LC \norm{F}_{L^2(\Omega)} + \|f\|_{C^\alpha(W)}\RC,
		\end{equation}
		for some constant $C>0$ independent of $u$, $F$, $f$.
	\end{corollary}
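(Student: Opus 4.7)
The plan is that this corollary is an immediate consequence of Lemma~\ref{lem: well-posedness} combined with the embedding estimate from Lemma~\ref{first-useful-est}. The only thing to verify is that any $f \in C^\alpha_c(W)$ induces, via zero extension, an element of $\H_T(\Omega_e)$ whose trace norm is controlled by $\|f\|_{C^\alpha(W)}$.

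First, given $f \in C^\alpha_c(W)$ with $W \subset \Omega_e$, I extend $f$ by zero to a function $\tilde f \in C^\alpha_c(\R^n)$; since $\supp f$ is a compact subset of $W \subset \Omega_e$, this extension vanishes on a neighborhood of $\overline{\Omega}$ and in particular $\tilde f|_{\Omega_e} = f$ as elements of functions on $\Omega_e$. By Lemma~\ref{first-useful-est} applied to the bounded open set $W$, we have $\tilde f \in \H(\R^n)$ with
\begin{equation*}
    \|\tilde f\|_{\H(\R^n)} \le C \|f\|_{C^\alpha(W)},
\end{equation*}
for a constant $C = C(W,\alpha)>0$. In particular, $f \in \H_T(\Omega_e)$, and by the definition \eqref{eq:def-trace-norm} of the trace norm,
\begin{equation*}
    \|f\|_{\H_T(\Omega_e)} \le \|\tilde f\|_{\H(\R^n)} \le C \|f\|_{C^\alpha(W)}.
\end{equation*}

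Now I apply Lemma~\ref{lem: well-posedness} with this exterior datum $f$ and source $F$: it provides a unique weak solution $u \in \H(\R^n)$ to \eqref{eq: well-posedness}, satisfying
\begin{equation*}
    \|u\|_{\H(\R^n)} \le C\bigl(\|F\|_{L^2(\Omega)} + \|f\|_{\H_T(\Omega_e)}\bigr) \le C\bigl(\|F\|_{L^2(\Omega)} + \|f\|_{C^\alpha(W)}\bigr),
\end{equation*}
where the final constant absorbs the one from the trace estimate above. This yields \eqref{eq: well-posed estimate-corollary} and concludes the proof.

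There is no genuine obstacle here: the whole content reduces to checking that the zero extension of a compactly supported Hölder function on $W$ lies in $\H(\R^n)$ with an explicit bound, which is exactly the statement of Lemma~\ref{first-useful-est}. The corollary is really just a convenient reformulation of Lemma~\ref{lem: well-posedness} adapted to the class of exterior data used later in the paper (in the definition of the partial DN map and in the Runge approximation arguments).
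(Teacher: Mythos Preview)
Your proposal is correct and follows essentially the same approach as the paper: both combine Lemma~\ref{first-useful-est} with Lemma~\ref{lem: well-posedness}, using the definition of the trace norm to bound $\|f\|_{\H_T(\Omega_e)}$ by $\|f\|_{\H(\R^n)}$ and then by $C\|f\|_{C^\alpha(W)}$. Your version is slightly more explicit about the zero extension, but the argument is identical in substance.
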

	
	\begin{proof}
		The result follows by combining Lemma~\ref{first-useful-est} and Lemma~\ref{lem: well-posedness}, since $f \in C^\alpha_c(W) \subset \H(\R^n)$ and
		$$
		\bigl\| f\big|_{\Omega_e} \bigr\|_{\H_T(\Omega_e)} \le \|f\|_{\H(\R^n)} \le C
		\|f\|_{C^\alpha(W)}
		$$
		by definition of $\|\cdot\|_{\H_T(\Omega_e)}$.
	\end{proof}

	\subsection{The DN map}
	
	With Theorem \ref{lem: well-posedness} at hand, the DN map of \eqref{eq: main equation} can be defined rigorously via the bilinear form $B_q$ defined in (\ref{eq:def-bilinear-form-loglap-q}).

	\begin{lemma}[The DN map]
		Let $\Omega \subset \R^n$ be a bounded Lipschitz domain, and let $q\in L^\infty(\Omega)$ fulfill \eqref{eigenvalue}. Define 
		\begin{equation}\label{eq: defi DN map}
			\begin{split}
				\left\langle  \Lambda_{q}f, g \right\rangle \vcentcolon = B_q(u_f,v_g),
			\end{split}
		\end{equation}
		for any $f, g \in \H_T(\Omega_e)$, where $u_f\in \mathbb{H}(\R^n)$ is the solution to \eqref{eq: main equation}, and $v_g\in \mathbb{H}(\R^n)$ can be any representative function with $\left. v_g\right|_{\Omega_e}=g$. Then 
		\begin{equation}\label{eq: mapping DN map}
			\Lambda_q \vcentcolon \H_T(\Omega_e)\to \H_T(\Omega_e)^*
		\end{equation}
		is a bounded linear operator, which satisfies the symmetry property
		\begin{equation}\label{eq: self-adjoint}
			\left\langle  \Lambda_{q}f, g \right\rangle = 	\left\langle  \Lambda_{q}g, f \right\rangle \qquad \text{for any $f, g \in \H_T(\Omega_e)$.}
		\end{equation}
	\end{lemma}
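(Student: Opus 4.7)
The plan is to verify, in order, the four properties asserted in the lemma: well-definedness of $\langle \Lambda_q f, g\rangle$ independently of the chosen extension $v_g$, linearity of $\Lambda_q$, boundedness as a map $\H_T(\Omega_e) \to \H_T(\Omega_e)^*$, and the symmetry (\ref{eq: self-adjoint}). Well-definedness is the crucial first step, since linearity, continuity, and symmetry all reduce to it together with results already available from earlier in Section~\ref{sec: forward problem}.

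To establish independence of the representative, I would take two extensions $v_g, v_g' \in \H(\R^n)$ of $g$, observe that their difference lies in $\H_0(\Omega)$, and apply the weak formulation of \eqref{eq: main equation} (with source $F=0$ and test function $v_g-v_g'$) to conclude $B_q(u_f, v_g-v_g') = 0$. Linearity of $\Lambda_q$ in $f$ then follows from uniqueness in Lemma~\ref{lem: well-posedness}, which makes $f \mapsto u_f$ linear, while linearity in $g$ comes from the bilinearity of $B_q$ together with the linearity of the minimal extension map $g \mapsto \tilde g$ guaranteed by Lemma~\ref{minimal extension}.

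For boundedness, my plan is to estimate $\langle \Lambda_q f, g\rangle$ using the energy-minimizing extension $\tilde g$ from Lemma~\ref{minimal extension}, for which $\|\tilde g\|_{\H(\R^n)} = \|g\|_{\H_T(\Omega_e)}$. Combining the continuity bound (\ref{eq:B-q-est-1}) of Lemma~\ref{B-q-estimates} with the well-posedness estimate (\ref{eq: well-posed estimate}) of Lemma~\ref{lem: well-posedness} applied with $F=0$ will yield
\[
|\langle \Lambda_q f, g\rangle| \le (2+\|q\|_{L^\infty(\Omega)}) \|u_f\|_{\H(\R^n)} \|\tilde g\|_{\H(\R^n)} \le C \|f\|_{\H_T(\Omega_e)} \|g\|_{\H_T(\Omega_e)},
\]
which gives the mapping property (\ref{eq: mapping DN map}).

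Finally, for the symmetry (\ref{eq: self-adjoint}), I would exploit that $B_q$ is itself symmetric on $\H(\R^n)$: symmetry of $B_0$ is (\ref{eq: symmetry log}), and the $L^2$-pairing $(qv,w)_{L^2(\Omega)}$ is trivially symmetric. Since $\langle \Lambda_q f, g\rangle$ is independent of the choice of extension, I am free to pick $v_g := u_g$ and analogously $v_f := u_f$, both of which are admissible extensions of their respective Dirichlet data in view of the boundary conditions in \eqref{eq: main equation}. Then $\langle \Lambda_q f, g\rangle = B_q(u_f, u_g) = B_q(u_g, u_f) = \langle \Lambda_q g, f\rangle$. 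No step here poses a serious obstacle; each is a direct consequence of an already-established ingredient, and the conceptual core of the argument is the observation that the flexibility in choosing the extension can be used both to get the sharp bound (via $\tilde g$) and to get symmetry (via $u_g$).
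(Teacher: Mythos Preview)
Your proposal is correct and follows essentially the same route as the paper: well-definedness via the weak formulation applied to the difference of two extensions, boundedness via the minimizing extension $\tilde g$ combined with (\ref{eq:B-q-est-1}) and (\ref{eq: well-posed estimate}), and symmetry from the symmetry of $B_q$. You are slightly more explicit than the paper about linearity and about the choice $v_g = u_g$ in the symmetry step, but these are just elaborations of the same argument.
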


	\begin{proof}
		First, if there are two functions $v_g, \wt v_g \in \mathbb{H}(\R^n)$ such that $\left. v_g\right|_{\Omega_e}= \left.\wt v_g \right|_{\Omega_e}$, then $v_g-\wt v_ g \in \mathbb{H}_0(\Omega)$. Therefore, by the linearity of $B_q(u_f ,\cdot)$, one has 
		\begin{equation}
			B_q (u_f,  \wt v_{g}) = \underbrace{B_q (u_f,v_g) + B_q (u_f , \wt v_{g}-v_g)=B_q (u_f,v_g)}_{\text{since }u_f \text{ solves }\eqref{eq: main equation} \text{ and }\wt v_{g}-v_g \in \mathbb{H}_0(\Omega)} ,
		\end{equation}
		which shows \eqref{eq: defi DN map} is well-defined. To show the boundedness of $\Lambda_q$, we let $\tilde g \in \H(\R^n)$ be the $\|\cdot\|_{\H(\R^n)}$-minimizing extension of $g \in \H_T(\Omega_e)$ given by Lemma~\ref{minimal extension}, so that $\|\tilde g\|_{\H(\R^n)}= \|g\|_{\H_T(\Omega_e)}$. Then, by Lemma~\ref{B-q-estimates} and \eqref{eq: well-posed estimate} applied with $F=0$, we have 
		$$			\left|  \left\langle \Lambda_q f, g \right\rangle \right|  =\left| B_q (u_f,\tilde g)\right| \leq C\|u_f\|_{\H(\R^n)}\|\tilde g\|_{\H(\R^n)} \le C \|f\|_{\H_T(\Omega_e)}\|g\|_{\H_T(\Omega_e)}.
		$$
		This shows that $\Lambda_q \vcentcolon \H_T(\Omega_e)\to \H_T(\Omega_e)^*$ is bounded. Now, the symmetry of the DN map can be seen from the symmetry of the bilinear form $B_q$. This proves the assertion. 	
	\end{proof}
	
	We can also derive the integral identity.
	
	\begin{lemma}[Integral identity]\label{lem: integral id}
		Let $\Omega \subset \R^n$ be a bounded Lipschitz domain, and let $q_j\in L^\infty(\Omega)$ satisfy (\ref{eigenvalue}) for $j=1,2$. For any $f_1,f_2 \in \H_T(\Omega_e)$, we then have 
		\begin{equation}\label{eq: integral id}
			\left\langle \LC \Lambda_{q_1}-\Lambda_{q_2} \RC f_1, f_2 \right\rangle =\int_{\Omega} \LC q_1 -q_2 \RC u_{f_1}u_{f_2}\, dx,
		\end{equation}
		where $u_{f_j}$ is the unique weak solution to
		\begin{equation}
			\begin{cases}
				\LC \loglap + q_j \RC u_{f_j} =0 &\text{ in }\Omega, \\
				u_{f_j}=f_j &\text{ in }\Omega_e,
			\end{cases}
		\end{equation}
		for $j=1,2$.
	\end{lemma}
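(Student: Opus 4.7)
The plan is a direct algebraic manipulation that combines three ingredients already established: the definition of the DN map via $B_q$ as in \eqref{eq: defi DN map}, the fact that the value $B_{q_j}(u_{f_j},\cdot)$ is independent of the choice of $\H(\R^n)$-representative of the second argument (which was the first step in verifying that $\Lambda_q$ is well-defined), and the symmetry \eqref{eq: symmetry log} of $B_0$ together with the obvious symmetry of the $L^2$-pairing with the multiplier $q_j$. The well-posedness from Lemma~\ref{lem: well-posedness} provides the unique weak solutions $u_{f_j}\in\H(\R^n)$ needed as extensions.

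First I would compute $\langle\Lambda_{q_1}f_1,f_2\rangle$ by taking, as the representative extension of $f_2$, the $q_2$-solution $u_{f_2}$ itself; this is allowed because $u_{f_2}\in\H(\R^n)$ and $u_{f_2}|_{\Omega_e}=f_2$. By \eqref{eq: defi DN map} this gives
\begin{equation*}
\langle\Lambda_{q_1}f_1,f_2\rangle = B_{q_1}(u_{f_1},u_{f_2}).
\end{equation*}
Next I would handle $\langle\Lambda_{q_2}f_1,f_2\rangle$ by applying the DN-map symmetry \eqref{eq: self-adjoint} to rewrite it as $\langle\Lambda_{q_2}f_2,f_1\rangle$, and then using the extension $u_{f_1}$ of $f_1$ in \eqref{eq: defi DN map}:
\begin{equation*}
\langle\Lambda_{q_2}f_1,f_2\rangle = B_{q_2}(u_{f_2},u_{f_1}) = B_{q_2}(u_{f_1},u_{f_2}),
\end{equation*}
where the last equality follows from the symmetry of $B_0$ in \eqref{eq: symmetry log} and the obvious symmetry of $(q_2\cdot,\cdot)_{L^2(\Omega)}$, which together make $B_{q_2}$ symmetric.

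Subtracting the two identities and using $B_{q_j}(u,v)=B_0(u,v)+(q_j u,v)_{L^2(\Omega)}$, the $B_0$-contributions cancel and one is left with
\begin{equation*}
\langle(\Lambda_{q_1}-\Lambda_{q_2})f_1,f_2\rangle = \bigl((q_1-q_2)u_{f_1},u_{f_2}\bigr)_{L^2(\Omega)} = \int_\Omega (q_1-q_2)\,u_{f_1}u_{f_2}\,dx,
\end{equation*}
which is \eqref{eq: integral id}. There is no real obstacle here: the argument is routine once one is careful to exploit the freedom to choose the extension of $f_2$ on the $q_1$-side and the extension of $f_1$ on the $q_2$-side (after flipping with the DN-map symmetry), so that both sides are evaluated on the pair $(u_{f_1},u_{f_2})$ and the nonlocal parts cancel cleanly.
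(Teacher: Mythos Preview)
Your proof is correct and follows essentially the same route as the paper's own proof: use the self-adjointness \eqref{eq: self-adjoint} of $\Lambda_{q_2}$ to flip the order of the arguments, evaluate each DN map on the pair $(u_{f_1},u_{f_2})$ via the freedom in choosing the representative extension in \eqref{eq: defi DN map}, and then subtract so that the $B_0$-contributions cancel. The paper records the computation in one display without spelling out the choice of extensions, but the argument is identical.
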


	\begin{proof}
		Via \eqref{eq: self-adjoint}, one can obtain 
		\begin{equation}
			\begin{split}
				\left\langle \LC \Lambda_{q_1}-\Lambda_{q_2} \RC f_1, f_2 \right\rangle  &= \left\langle \Lambda_{q_1}(f_1), f_2 \right\rangle  - \left\langle f_1,  \Lambda_{q_2}(f_2) \right\rangle \\
				&= B_{q_1}(u_{f_1},u_{f_2}) -B_{q_2}(u_{f_1},u_{f_2})\\
				&=\int_{\Omega} \LC q_1 -q_2 \RC u_{f_1}u_{f_2}\, dx,
			\end{split}
		\end{equation}
		where we used \eqref{eq: symmetry log} in the last equality.
	\end{proof}

	\section{Proof of Theorem \ref{thm: uniqueness}}\label{sec: pf of global unique}
	
	As we discussed before, in solving nonlocal inverse problems, one usually needs the Runge approximation for nonlocal operators. The proof of the (qualitative) Runge approximation for $\loglap$ is based on the following unique continuation property (UCP) for the logarithmic Laplacian. Recall the definition of the space $L^1_0(\R^n)$ given in \eqref{L^1_0(R^n)}.
	
	\begin{proposition}[\text{\cite[Theorem 5.1]{CHW2023extension}}]\label{prop: UCP}
		Let $D\subset \R^n$ be a nonempty open set. If $u\in L^1_0(\R^n)$ satisfies 
		\begin{equation}
			u=\loglap u =0 \text{ in $D$ in distributional sense,} 
		\end{equation}
		then $u \equiv 0 $ in $\R^n$.
	\end{proposition}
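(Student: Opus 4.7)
The plan is to establish the unique continuation property for $\loglap$ by reducing it, via a Caffarelli--Silvestre--type extension problem, to a more classical unique continuation statement for a degenerate elliptic operator on the upper half-space $\R^{n+1}_+$. This mirrors the well-known strategy for $(-\Delta)^s$ with $s \in (0,1)$ but requires additional care to handle the logarithmic (near zero-order) nature of $\loglap$.

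First, I would construct an extension operator associating to each $u \in L^1_0(\R^n)$ a function $U: \R^{n+1}_+ \to \R$ with $U(\cdot,0) = u$ (in the trace sense) and such that $U$ is a weak solution of a degenerate elliptic equation of the form
\begin{equation*}
\Div_{(x,y)}(w(y) \nabla U)(x,y) = 0 \qquad \text{in }\R^{n+1}_+,
\end{equation*}
with a suitable weight $w$ for which a corresponding weighted Neumann-type boundary trace recovers $\loglap u$, up to a universal constant and possibly a lower-order local contribution in $u$. A natural route is to differentiate the standard Caffarelli--Silvestre extension $U^{(s)}$ of $u$ with respect to $s$ at $s=0$, motivated by the defining formula $\loglap u = \tfrac{d}{ds}\big|_{s=0}(-\Delta)^s u$. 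One then needs to verify that $U$ lies in a suitable weighted Sobolev space so that all the trace and boundary-flux operations are well defined for $u \in L^1_0(\R^n)$.

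Second, I would translate the hypothesis $u = \loglap u = 0$ in $D$ into boundary conditions for $U$: the trace $U|_{D\times\{0\}}$ and the weighted co-normal derivative both vanish. Using this double vanishing on the flat piece $D \times \{0\}$, I would extend $U$ across $D \times \{0\}$ into the lower half-space (either by even or odd reflection, chosen compatibly with the weight $w$), producing a function that is a weak solution of a degenerate elliptic equation on a connected open neighborhood of $D \times \{0\}$ in $\R^{n+1}$. Then, invoking a strong unique continuation property for Muckenhoupt-weighted elliptic operators (following the Carleman-estimate framework of Fabes--Kenig--Serapioni and its extensions by R\"uland, Tao--Zhang, and others) on this neighborhood, I would deduce $U \equiv 0$ near $D \times \{0\}$. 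A final global propagation argument, based on the structural rigidity of the extension (for instance, real-analyticity of $U$ in the interior of $\R^{n+1}_+$, or an iterated unique-continuation-to-the-boundary step), would yield $U \equiv 0$ on all of $\R^{n+1}_+$, hence $u = U|_{y=0} \equiv 0$.

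The main obstacle I anticipate is the construction of the extension together with the precise identification of its weighted Neumann-type boundary operator as $\loglap$. For the fractional Laplacian this is textbook material, but in the logarithmic case the relevant weight is no longer a pure power $y^{1-2s}$ and additional lower-order corrections arise from differentiating $|\xi|^{2s}$ at $s=0$; these must be controlled carefully so as to still produce a degenerate elliptic operator in an $A_2$-weight class for which a strong UCP is available. A secondary technical point is that $L^1_0(\R^n)$ is the minimal regularity class making $\loglap u$ defined as a distribution, so one must establish enough regularity of $U$ up to the boundary (in appropriate weighted spaces) to justify the reflection/extension step and the application of the Carleman-based UCP.
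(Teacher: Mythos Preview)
The paper does not give its own proof of this proposition; it is quoted verbatim from \cite[Theorem 5.1]{CHW2023extension} and used as a black box. There is therefore no in-paper argument to compare your proposal against.

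That said, your outline is precisely the strategy of the cited source \cite{CHW2023extension}, as the present paper itself signals in the introduction: ``the problem of finding an extension problem for the logarithmic Laplacian has been solved in \cite{CHW2023extension}, and the authors used it to prove the UCP for $\loglap$.'' So you have correctly identified both the method (a Caffarelli--Silvestre--type extension tailored to $\loglap$, followed by a reflection and a weighted-elliptic unique continuation argument) and the main technical hurdle (the weight is no longer a pure power $y^{1-2s}$, and one must check that the resulting degenerate operator still falls within a class for which Carleman-based strong UCP is available). Your proposal is a faithful high-level summary of that proof; turning it into a complete argument requires exactly the work carried out in \cite{CHW2023extension}, in particular the explicit construction of the extension and the verification of the boundary-flux identity recovering $\loglap u$ for $u \in L^1_0(\R^n)$.
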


	Let $\Omega\subset \R^n$ be a bounded nonempty Lipschitz domain, and let $q \in L^\infty(\Omega)$ satisfy \eqref{eigenvalue}. Then the solution operator associated with \eqref{eq: main equation} is defined as 
	\begin{equation}\label{eq: solution op.}
		P_q \vcentcolon \H_T(\Omega_e) \to \mathbb{H}(\R^n), \quad f\mapsto u_f,
	\end{equation}
	where $u_f \in \mathbb{H}(\R^n)$ is the solution to \eqref{eq: main equation}, so the solution of (\ref{eq: well-posedness}) with $F=0$. With Proposition \ref{prop: UCP} at hand, we can show the Runge approximation.
	
	\begin{proposition}[Runge approximation]\label{prop: Runge}
		Let $\Omega \subset \R^n$ be a bounded Lipschitz domain, let $ q\in L^\infty(\Omega)$ satisfy \eqref{eigenvalue}, and consider the solution operator $P_q$ given by \eqref{eq: solution op.}. Then for every nonempty open set $W\Subset \Omega_e$, the set 
		\begin{equation}
			\mathcal{R}\vcentcolon = \left\{ P_q f\big|_{\Omega} : \, f\in C^\infty_c(W)\right\},
		\end{equation}
		is dense in $L^2(\Omega)$.
	\end{proposition}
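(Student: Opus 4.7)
The plan is a Hahn-Banach/duality argument that reduces density of $\mathcal{R}$ to the unique continuation property provided by Proposition~\ref{prop: UCP}. By Hahn-Banach in $L^2(\Omega)$, it suffices to show that every $v\in L^2(\Omega)$ which is $L^2(\Omega)$-orthogonal to $\mathcal{R}$ must vanish. Given such a $v$, I would invoke Lemma~\ref{lem: well-posedness} with source $F=v$ and zero exterior data to produce a unique $\phi\in\H_0(\Omega)$ solving the (formally self-adjoint) dual problem
\begin{equation*}
B_q(\phi,\psi)=(v,\psi)_{L^2(\Omega)}\qquad\text{for all }\psi\in\H_0(\Omega).
\end{equation*}
The target is to prove $\phi\equiv 0$ in $\R^n$, which immediately yields $v=0$ upon testing the dual equation against $C_c^\infty(\Omega)\subset\H_0(\Omega)$.

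For each $f\in C_c^\infty(W)$ Lemma~\ref{first-useful-est} places $f$ in $\H(\R^n)$, and the solution $u_f=P_q f\in\H(\R^n)$ decomposes as $u_f=f+w_f$ with $w_f:=u_f-f\in\H_0(\Omega)$. Since $W\subset\Omega_e$ forces $f\equiv 0$ on $\Omega$, one has $u_f|_{\Omega}=w_f|_\Omega$, and testing the equation for $u_f$ against an arbitrary $\psi\in\H_0(\Omega)$ reduces to $B_q(w_f,\psi)=-B_0(f,\psi)$ (the potential term drops because $f|_\Omega=0$). Using $\phi\in\H_0(\Omega)$ as test function for the dual problem and exploiting the symmetry of $B_q$ inherited from \eqref{eq: symmetry log}, the orthogonality hypothesis becomes
\begin{equation*}
0\;=\;(v,u_f)_{L^2(\Omega)}\;=\;(v,w_f)_{L^2(\Omega)}\;=\;B_q(\phi,w_f)\;=\;B_q(w_f,\phi)\;=\;-B_0(f,\phi)
\end{equation*}
for every $f\in C_c^\infty(W)$.

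To close the argument, I would reinterpret $B_0(f,\phi)=0$ as a distributional identity on $W$. Since $\phi\in L^2(\R^n)$ is supported in $\overline{\Omega}$ it lies in $L^1_0(\R^n)$, so $\loglap\phi$ is well-defined as a distribution on $\R^n$. Using the Plancherel-type symmetry behind the definition \eqref{eq:def-bilinear-form-loglap} together with the fact that $\loglap f$ is a well-defined function via the integral representation \eqref{eq: integral representation}, one obtains $B_0(f,\phi)=B_0(\phi,f)=\langle \loglap\phi,f\rangle$, so $\loglap\phi=0$ in $W$ in the distributional sense. Combined with $\phi|_W=0$ (since $W\Subset\Omega_e$ and $\phi\in\H_0(\Omega)$), Proposition~\ref{prop: UCP} forces $\phi\equiv 0$ in $\R^n$, which gives $v=0$ as noted above. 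The step I expect to demand the most care is the rigorous identification $B_0(f,\phi)=\langle \loglap\phi,f\rangle$ when only $\phi\in\H_0(\Omega)\subset L^2(\R^n)$ and $f\in C_c^\infty(W)$ are at hand: this Parseval-type manipulation needs the compact support of $\phi$, pointwise regularity of $\loglap f$ away from $\supp f$, and Lemma~\ref{first-useful-est} to place $f$ inside $\H(\R^n)$.
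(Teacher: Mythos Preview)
Your proposal is correct and follows essentially the same approach as the paper: a Hahn--Banach argument, the dual problem with source $v$ solved in $\H_0(\Omega)$, the observation that $f-P_qf\in\H_0(\Omega)$ (your $-w_f$), and then the reduction to $B_0(f,\phi)=0$ followed by the UCP. The identification you flag as delicate, $B_0(f,\phi)=\langle\loglap\phi,f\rangle$, is handled in the paper by the one-line Plancherel computation $\int_{\R^n}\phi\,\loglap f\,dx = 2\int_{\R^n}(\log|\xi|)\widehat\phi\,\overline{\widehat f}\,d\xi = B_0(\phi,f)$, valid since $\loglap f\in L^2(\R^n)$ for $f\in C_c^\infty(W)$ and $\phi\in L^2(\R^n)$.
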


	\begin{proof}
		By the Hahn-Banach theorem, it suffices to show that for any $w\in L^2(\Omega)$, which satisfies  
		\begin{equation}\label{eq: runge 1}
			\LC P_qf , w\RC_{L^2(\Omega)}=0 \quad \text{for any} \quad f\in C^\infty_c(W),
		\end{equation}
		it results that $w \equiv 0$. Let $\varphi \in \mathbb{H}_0(\Omega)$ be the solution to 
		\begin{equation}\label{eq: runge 2}
			\begin{cases}
				\LC	\loglap +q  \RC \varphi = w &\text{ in }\Omega, \\
				\varphi =0 &\text{ in }\Omega_e.
			\end{cases}
		\end{equation}
		The well-posedness of the above equation was guaranteed in Section \ref{sec: forward problem}. Hence, we have 
		\begin{equation}
			\begin{split}
				B_q (\varphi,f)=B_q(\varphi, f-P_qf) = \LC w , f-P_qf \RC_{L^2(\Omega)} =-\LC w, P_q f \RC_{L^2(\Omega)}=0,
			\end{split}
		\end{equation}
		for any $f\in C^\infty_c(W)$, where we used \eqref{eq: runge 1} in the last identity. Hence for any $f\in C^\infty_c(W)$ we have, since $f \equiv 0$ in $\Omega$,
		\begin{align*}
			\int_{\R^n} \varphi \loglap f\,dx &= 2 \int_{\R^n}(\log |\xi|) \widehat \varphi(\xi)\widehat f(\xi)\,d\xi\\
			&= B_q(\varphi,f)+ \int_{\Omega}q(x) \varphi(x) f(x) \, dx \\
			&= 0.	
		\end{align*}
		This yields that $\loglap\varphi\equiv  0$ in $W$ in distributional sense, while also $\varphi \equiv 0$ in $W$. Note that $\varphi\in L^1_0(\R^n)$ since $\mathbb{H}_0(\Omega) \subset L^1_0(\R^n)$, and this allows us to apply the UCP of Proposition \ref{prop: UCP}. Therefore, Proposition \ref{prop: UCP} implies that $\varphi\equiv 0$ in $\R^n$ and therefore  $w\equiv 0$ as desired. This concludes the proof.	
	\end{proof}

	\begin{remark}
		As we have mentioned earlier in Section \ref{sec: introduction}, the logarithmic Laplacian is a near zero-order operator, so we do not expect that the above-mentioned Runge approximation possesses a higher regularity approximation property.
	\end{remark}
	
	We are now ready to prove the global uniqueness result by using the Runge approximation.

	\begin{proof}[Proof of Theorem \ref{thm: uniqueness}]
		With the condition \eqref{eq: same DN map in thm 1}, the symmetry of the operators $\Lambda_{q_i}$ and integral identity \eqref{eq: integral id} imply 
		\begin{align}\label{eq: integral id in proof 1}
			\int_{\Omega} \LC q_1 -q_2 \RC u_{f}u_{g}\, dx=0 \qquad \text{for all $f \in C^\infty_c(W_1)$, $g \in C^\infty_c(W_2)$,}
		\end{align}
		where $u_{f} = P_{q_1}f$ and $u_{g} = P_{q_2} g \in \mathbb{H}(\R^n)$ are the solutions to
		\begin{equation}\label{eq: equation in proof j=1}
			\begin{cases}
				\LC \loglap + q_1 \RC u_{f} =0 &\text{ in }\Omega, \\
				u_{f}=f  &\text{ in }\Omega_e,
			\end{cases}
		\end{equation}
		and 
		\begin{equation}\label{eq: equation in proof j=2}
			\begin{cases}
				\LC \loglap + q_2 \RC u_{g} =0 &\text{ in }\Omega, \\
				u_{g}=g &\text{ in }\Omega_e,
			\end{cases}
		\end{equation}
		respectively. 
		By the Runge approximation in Proposition \ref{prop: Runge}, given any $h \in L^2(\Omega)$, there exist sequences of functions $f_k \in C^\infty_c(W_1)$, $g_k \in C^\infty_c(W_2)$ with $P_{q_1} f_k \to h$ and $P_{q_2}g_k \to 1$ in $L^2(\Omega)$ as $k\to \infty$. By (\ref{eq: integral id in proof 1}), we then conclude that 
		\begin{equation}
			\int_{\Omega} \LC q_1 -q_2 \RC h \, dx=\lim_{k \to \infty}\int_{\Omega} \LC q_1 -q_2 \RC (P_{q_1}f_k)(P_{q_2}g_k) \, dx =  0.
		\end{equation}
		Due to the arbitrariness of $h\in L^2(\Omega)$, there must hold $q_1=q_2$ a.e. in $\Omega$. This proves the assertion.
	\end{proof}

	\begin{remark}
		It would be interesting to ask if one can prove Theorem \ref{thm: uniqueness} using a single measurement. It is clear that if the potential $q$ is regular enough (e.g. continuous potentials), one may directly apply the existing UCP of Proposition \ref{prop: UCP} for $\loglap$. However, for the rough potential case, one needs to study a measurable UCP result for the logarithmic Laplacian, i.e., the UCP for $\loglap$ holds on a set of positive measures. 
	\end{remark}

	\section{Constructive uniqueness}\label{sec: mono}
	
	To prove Theorem \ref{thm: const. uniqueness}, we will utilize the monotonicity method combined with the localized potentials for \eqref{eq: main equation}. This shows that increasing the potential $q$ increases the DN map $\Lambda_q$ in the sense of quadratic forms, and vice versa. To accomplish the complete arguments, we need the following localized potentials.
	
	\subsection{Localized potentials}
	
	With the Runge approximation at hand, we can immediately derive the existence of the localized potentials.
	
	\begin{lemma}[Localized potentials]\label{lem:localized_potentials} 
		Let $\Omega\subset \R^{n}$ be a bounded Lipschitz domain $q\in L^{\infty}(\Omega)$, and $W\Subset\Omega_{e}$ be a nonempty open set. For every measurable set $M\subset\Omega$, there exists a sequence $f^{k}\in C_{c}^{\infty}(W)$, such that the corresponding
		solutions $u^{k}\in \mathbb H(\R^{n})$ of 
		\begin{equation}\label{eq:lem_loc_pot_uk}
			\begin{cases}
				\LC \loglap +q \RC u^{k}=0 & \text{ in }\Omega,\\
				u^{k}=f^k & \text{ in }\Omega_e,
			\end{cases}
		\end{equation}
		for all $k\in \N$, satisfy that 
		\[
		\int_{M} \left|u^{k} \right|^{2}\, dx \to\infty\quad\text{and}\quad\int_{\Omega\setminus M}\left|u^{k}\right|^{2}\, dx \to0 \quad  \mbox{as}\quad k\to \infty.
		\]
	\end{lemma}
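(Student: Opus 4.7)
My plan is to reduce the lemma to a direct application of the Runge approximation (Proposition \ref{prop: Runge}) combined with a linear rescaling. Since the case $|M|=0$ makes the first condition $\int_M |u^k|^2\,dx \to \infty$ unattainable, I may (and do) assume $|M|>0$; note that $|\Omega\setminus M|<\infty$ is automatic from $|\Omega|<\infty$.

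The key step is to pick a good approximation target. I apply Proposition \ref{prop: Runge} to the function $\chi_M \in L^2(\Omega)$ (which lies in $L^2(\Omega)$ precisely because $\Omega$ is bounded): there exists a sequence $g^k \in C_c^\infty(W)$ such that the corresponding solutions $v^k := P_q g^k$ satisfy $v^k \to \chi_M$ in $L^2(\Omega)$. Splitting this convergence across $M$ and $\Omega\setminus M$ yields
\[
\|v^k\|_{L^2(M)} \to |M|^{1/2} > 0 \qquad \text{and} \qquad \|v^k\|_{L^2(\Omega\setminus M)} \to 0.
\]

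The remaining task is to blow up the $M$-norm while keeping the $(\Omega\setminus M)$-norm vanishing. I pass to a subsequence along which $\beta_k := \|v^k\|_{L^2(\Omega\setminus M)}>0$ (the degenerate case $\beta_k=0$ is even more favorable, since any scaling then preserves the zero). Setting $\alpha_k := \beta_k^{-1/2}$, one has $\alpha_k \beta_k = \beta_k^{1/2} \to 0$ while $\alpha_k \|v^k\|_{L^2(M)} \to \infty$ because $\alpha_k \to \infty$ and $\|v^k\|_{L^2(M)}$ stays bounded below by a positive constant. I now define $f^k := \alpha_k g^k \in C_c^\infty(W)$; by linearity of the forward problem (which follows from the uniqueness statement of Lemma \ref{lem: well-posedness}), the corresponding solution is $u^k = \alpha_k v^k$, and its $L^2$-norms on $M$ and $\Omega \setminus M$ behave as required.

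In this argument there is no real obstacle: all the analytic content has already been absorbed into the Runge approximation (which itself rests on the UCP of Proposition \ref{prop: UCP}), and the rest is scaling. The only minor care needed is to notice that $\chi_M$ is a legitimate $L^2$-target and that the rescaled exterior data $\alpha_k g^k$ still belongs to $C_c^\infty(W)$, both of which are immediate.
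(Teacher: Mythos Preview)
Your argument is correct and follows essentially the same route as the paper: approximate $\chi_M$ (the paper uses the normalized version $\chi_M/|M|^{1/2}$) via the Runge approximation of Proposition~\ref{prop: Runge}, then rescale by $\|v^k\|_{L^2(\Omega\setminus M)}^{-1/2}$ to force the $M$-norm to blow up while the complementary norm vanishes. The only cosmetic difference is that the paper invokes the UCP to guarantee $\|\widetilde u^k\|_{L^2(\Omega\setminus M)}>0$, whereas you pass to a subsequence and handle the degenerate case $\beta_k=0$ separately; your treatment is at least as clean, and your explicit remark that the case $|M|=0$ must be excluded is a point the paper leaves implicit.
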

	\begin{proof}
		Applying the Runge approximation of Proposition \ref{prop: Runge}, we find a sequence of functions $\widetilde{f}^{k} \in C_{c}^{\infty}(W)$
		so that the corresponding solutions $\left. \widetilde{u}^{k}\right|_{\Omega}$
		converge to $\frac{\chi_{M}}{\sqrt{\int_{M}1\, dx}}$ in $L^{2}(\Omega)$, and 
		\[
		\left\|\widetilde{u}^{k} \right\|_{L^{2}(M)}^2=\int_{M}\left|\widetilde{u}^{k}\right|^{2}\, dx\to 1,
		\quad\text{and}\quad \left\|\widetilde{u}^{k}\right\|_{L^{2}(\Omega\setminus M)}^2=\int_{\Omega\setminus M}\left|\widetilde{u}^{k} \right|^{2}\,dx \to 0
		\]
		as $k\to \infty$.
		We may assume for all $k\in \mathbb{N}$ that
		$\widetilde{u}^{k}\not\equiv0$ without loss of generality, so that  $\left\|\widetilde{u}^{(k)}\right\|_{L^{2}(\Omega \setminus M)}>0$
		follows from the UCP of Proposition \ref{prop: UCP} for $\loglap$. Taking normalizing 
		\[
		f^{k}:=\frac{\widetilde{f}^{k}}{\left\|\widetilde{u}^{k}\right\|_{L^{2}(\Omega\setminus M)}^{1/2}} , 
		\]
		the sequence of corresponding solutions $u^{k}\in \mathbb H(\R^{n})$ of \eqref{eq:lem_loc_pot_uk} possesses the desired property 
		\[
		\big\|u^{k}\big\|_{L^{2}(M)}^2 = \frac{\left\|\widetilde{u}^{k}\right\|_{L^{2}(M)}^2}{ \left\|\widetilde{u}^{k}\right\|_{L^{2}(\Omega\setminus M)} } \to \infty,
		\quad\text{and}\quad \big\|u^{k}\big\|_{L^{2}(\Omega\setminus M)}^2=\big\|\widetilde{u}^{k}\big\|_{L^{2}(\Omega\setminus M)}\to 0,
		\]
		as $k\to \infty$. This completes the proof.
	\end{proof}

	\subsection{Monotonicity relations}
	
	Here we complete the proof of Theorems~\ref{thm: equivalent monotonicity} and \ref{thm: const. uniqueness}.
	
	
	\begin{lemma}[Monotonicity relations]
		\label{Lemma for monotonicity} Let $\Omega\subset \R^{n}$ be a bounded open Lipschitz domain and $f \in \H_T(\Omega_e)$. Moreover, for $j=1,2$, let $q_{j}\in L^\infty(\Omega)$ satisfy \eqref{eigenvalue}, and let $u_{j}\in \mathbb H(\R^{n})$ be the unique solutions of 
		\begin{equation}\label{eq: log-equation for monotonicity}
			\begin{cases}
				\LC \loglap+q_{j} \RC u_{j}=0 & \mbox{ in }\Omega,\\
				u_{j}=f &\text{ in }\Omega_e.
			\end{cases}
		\end{equation}
		Then we have the monotonicity	relations 
		\begin{equation}\label{monotone relation 1}
			\left\langle \left(\Lambda_{q_{2}}-\Lambda_{q_{1}} \right) f,f\right\rangle \leq\int_{\Omega}(q_{2}-q_{1})\left|u_{1}\right|^{2}\, dx,
		\end{equation}
		and 
		\begin{equation}\label{monotone relation 2} 
			\begin{split}
				\left\langle (\Lambda_{q_2}-\Lambda_{q_1} f, f \right\rangle \geq\int_{\Omega}\left(q_{2}-q_{1}\right)\left|u_{2}\right|^{2} \, dx.
			\end{split}
		\end{equation}
	\end{lemma}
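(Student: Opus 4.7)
The plan is to reduce both monotonicity inequalities to the Dirichlet minimization principle for the quadratic forms $B_{q_j}$. To begin, since $u_j$ itself is an admissible extension of $f$ in the definition \eqref{eq: defi DN map}, I would observe that
\begin{equation*}
\langle \Lambda_{q_j} f, f \rangle = B_{q_j}(u_j, u_j), \qquad j = 1, 2,
\end{equation*}
so the quantity to analyze is the scalar difference $B_{q_2}(u_2, u_2) - B_{q_1}(u_1, u_1)$. The other key algebraic identity is the trivial splitting $B_{q_2}(v, v) - B_{q_1}(v, v) = \int_\Omega (q_2 - q_1) v^2 \, dx$ for any $v \in \H(\R^n)$, which follows directly from the definition \eqref{eq:def-bilinear-form-loglap-q} of $B_q$.

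The main step is to establish the variational characterization: each $u_j$ minimizes $v \mapsto B_{q_j}(v, v)$ on the affine set $\{v \in \H(\R^n) : v|_{\Omega_e} = f\}$. Given an admissible competitor $v$, setting $\varphi := v - u_j \in \H_0(\Omega)$ and expanding, together with the weak equation $B_{q_j}(u_j, \varphi) = 0$, gives
\begin{equation*}
B_{q_j}(v, v) = B_{q_j}(u_j, u_j) + 2 B_{q_j}(u_j, \varphi) + B_{q_j}(\varphi, \varphi) = B_{q_j}(u_j, u_j) + B_{q_j}(\varphi, \varphi) \geq B_{q_j}(u_j, u_j),
\end{equation*}
where the final inequality uses nonnegativity of $B_{q_j}(\varphi, \varphi)$ on $\H_0(\Omega)$, inherited from the coercivity estimate \eqref{eq:B-q-est-3} in Lemma \ref{B-q-estimates}.

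Combining these two ingredients, I would prove \eqref{monotone relation 1} by testing the minimality of $u_2$ against the admissible competitor $u_1$, obtaining
\begin{equation*}
B_{q_2}(u_2, u_2) - B_{q_1}(u_1, u_1) \leq B_{q_2}(u_1, u_1) - B_{q_1}(u_1, u_1) = \int_\Omega (q_2 - q_1)\, u_1^2 \, dx,
\end{equation*}
and symmetrically I would prove \eqref{monotone relation 2} by testing the minimality of $u_1$ against the competitor $u_2$. The only genuinely delicate ingredient is the coercivity of $B_{q_j}$ on $\H_0(\Omega)$ under the eigenvalue condition \eqref{eigenvalue}, and this has already been supplied by Lemma \ref{B-q-estimates}; beyond that, the argument is a clean application of Dirichlet-type energy minimization and I do not anticipate any serious obstacle.
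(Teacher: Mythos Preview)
Your proposal is correct and is essentially the same argument as the paper's, only phrased through the Dirichlet minimization principle rather than by directly expanding $0 \le B_{q_2}(u_2-u_1,u_2-u_1)$; the two formulations are algebraically equivalent once one notes that $B_{q_2}(u_1,u_1)=B_{q_2}(u_2,u_2)+B_{q_2}(u_1-u_2,u_1-u_2)$ via the weak equation. Your explicit appeal to \eqref{eq:B-q-est-3} for the nonnegativity of $B_{q_j}(\varphi,\varphi)$ on $\H_0(\Omega)$ is exactly the ingredient the paper uses implicitly.
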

	\begin{proof}
		The definition of the DN map \eqref{eq: defi DN map} implies that
		$$
		\left\langle\Lambda_{q_1}f,f \right\rangle  =B_{q_{1}}(u_{1},u_{1})\qquad \text{and}\qquad \left\langle\Lambda_{q_2}f,f \right\rangle  =B_{q_{2}}(u_{2},u_{2})=B_{q_{2}}(u_{2},u_{1}).
		$$
		By the symmetry property of the bilinear form, we thus have
		\begin{equation}
			\begin{split}
				0&\leq  B_{q_{2}}(u_{2}-u_{1},u_{2}-u_{1})\\
				&=B_{q_{2}}(u_{2},u_{2})-2B_{q_{2}}(u_{2},u_{1})+B_{q_{2}}(u_{1},u_{1})\\
				&= -\left\langle\Lambda_{q_2}f,f \right\rangle+B_{q_{2}}(u_{1},u_{1})\\
				&=\left\langle \LC \Lambda_{q_1}-\Lambda_{q_2}\RC f,f \right\rangle+B_{q_{2}}(u_{1},u_{1})-B_{q_{1}}(u_{1},u_{1})\\
				&= \left\langle\LC \Lambda_{q_1}-\Lambda_{q_2}\RC f,f \right\rangle+\int_{\Omega}(q_{2}-q_{1}) \left| u_{1}\right|^{2} \, dx,
			\end{split}
		\end{equation}
		which proves \eqref{monotone relation 1}. Interchanging $q_{1}$ and $q_{2}$ in the above computations yields \eqref{monotone relation 2}. This proves the assertion.
	\end{proof}

	\begin{proof}[Proof of Theorem~\ref{thm: equivalent monotonicity}]
		We have to show the equivalence of Properties (i)--(iii). Suppose first that (i) holds, i.e., we have $q_{1}\leq q_{2}$ a.e. in $\Omega$. Then (\ref{monotone relation 2}) readily implies that $\left\langle\LC \Lambda_{q_2}-\Lambda_{q_1}\RC f,f \right\rangle \ge 0$ for every $f \in \H_T(\Omega_e)$, so (ii) holds. From (ii) it directly follows that
		(\ref{quadratic sense}) holds for any nonempty bounded open set $W \Subset \Omega_e$, since $C^\infty_c(W) \subset \H_T(\Omega_e)$. Therefore (ii) implies (iii).
		Finally, let us show the implication (iii) $\Longrightarrow$ (i). So we assume that (\ref{quadratic sense}) holds for any nonempty bounded open set $W \Subset \Omega_e$ for some nonempty bounded open set $W \Subset \Omega_e$, and we need to show that $q_{1}\leq q_{2}$ a.e.\ in $\Omega$. Arguing by contradiction, let us assume that $q_{1}\leq q_{2}$ does not hold a.e.\ in $\Omega$. Then there
		exists $\delta>0$ and a positive measurable set $M\subset\Omega$ such that $q_{1}-q_{2}\geq\delta>0$ on $M\subseteq\Omega$. Using the sequence
		of localized potentials from Lemma \ref{lem:localized_potentials}
		for the coefficient $q_{1}$, and the monotonicity inequality from
		Lemma \ref{Lemma for monotonicity}, one can get 
		\begin{equation}
			\begin{split}
				\left\langle \LC \Lambda_{q_2}-\Lambda_{q_1}\RC f^{k},f^{k}\right\rangle  & \leq\int_{\Omega}(q_{2}-q_{1})\left|u_{1}^{k}\right|^{2}\, dx\\
				& \leq \left\|q_{2}-q_{1}\right\|_{L^{\infty}(\Omega\setminus M)}\big\|u_{1}^{k}\big\|_{L^{2}(\Omega\setminus M)}^{2}-\delta \big\|u_{1}^{k}\big\|_{L^{2}(M)}^2\\
				&\to-\infty,\text{ as }k\to \infty,
			\end{split}
		\end{equation}
		which contradicts (\ref{quadratic sense}). Therefore, the assertion is proved.
	\end{proof}
	
	Finally, let us prove Theorem \ref{thm: const. uniqueness}.
	
	\begin{proof}[Proof of Theorem \ref{thm: const. uniqueness}]
		By \cite[Lemma 4.4]{harrach2017nonlocal-monotonicity}, it is known that given any nonnegative $q\in L^\infty(\Omega)$, then one has 
		\begin{equation}\label{eq: simple function sup}
			q(x)=\sup\left\{ \varphi(x):\, \varphi \in \Sigma_{+,0} , \  \varphi \leq q  \right\} \text{ for a.e. }x\in \Omega. 
		\end{equation}
		Moreover, since we assume that $\lambda_1(\Omega)>0$, the relation
		(\ref{eigenvalue}) is satisfied, and it is also satisfied for $\varphi$ in place of $q$ if $\varphi \in \Sigma_{+,0}$.  We may therefore combine \eqref{eq: simple function sup} with Theorem~\ref{thm: equivalent monotonicity}, applied to $q_1= \varphi \in \Sigma_{+,0}$ and $q_2 = q$, to complete the proof.
	\end{proof}

	\begin{remark}
		With the if-and-only-if monotonicity relations, inverse problems have more possible applications. For instance, one can also study inverse obstacle problems, which recover the unknown inclusion via the monotonicity test (see e.g. \cite{harrach2017nonlocal-monotonicity,HPSmonotonicity}). The stability result could also be interesting (see e.g. \cite{harrach2020monotonicity}).
	\end{remark}

	\medskip

	\noindent\textbf{Acknowledgments.} 
	Y.-H. Lin is partially supported by the National Science and Technology Council (NSTC) Taiwan, under the projects 113-2628-M-A49-003. Y.-H. Lin is also a Humboldt research fellow.

	\bibliography{refs} 
	
	\bibliographystyle{alpha}

\end{document}